
\documentclass[10pt]{amsart}
\usepackage{amsmath, amsthm, amssymb,mathscinet}
\usepackage{graphicx}
\usepackage{mathbbol}
\usepackage{txfonts}
\usepackage[usenames]{color}
\usepackage{graphicx}

\newtheorem{thm}{Theorem}[section]
\newtheorem{ex}[thm]{Example}

\newtheorem{lem}[thm]{Lemma}
\newtheorem{prop}[thm]{Proposition}
\newtheorem{defn}[thm]{Definition}
\newtheorem{rmrk}[thm]{Remark}

\newtheorem*{mthm*}{Main Theorem}

\newtheorem*{thmA'}{Theorem A'}
\newtheorem*{conj*}{Conjecture}
\numberwithin{equation}{section}



\newcommand{\be}{\begin{equation}}
\newcommand{\ee}{\end{equation}}
\newcommand{\bee}{\begin{equation*}}
\newcommand{\eee}{\end{equation*}}


\newcommand{\N}{\mathbb{N}}

\newcommand{\R}{\mathbb{R}}
\newcommand{\One}{{\bf \rm{1}}}

\newcommand{\Z}{\mathbb{Z}}

\newcommand{\union}{\cup}

\newcommand{\AlexnkD}{\text{Alex}^n(\kappa,D)}

\newcommand{\diam}{\rm{diam}}
\newcommand{\Alex}{\text{Alex\,}}

\newcommand{\Alexnk}{\text{Alex}^n(\kappa)}

\newcommand{\tang}[4]{\tilde\measuredangle_{#1}({#2}\,_{#4}^{#3})}


\newcommand{\GHto}{\stackrel { \textrm{GH}}{\longrightarrow} }
\newcommand{\Fto}{\stackrel {\mathcal{F}}{\longrightarrow} }


\newcommand{\Fm}{{\mathcal F}}

\newcommand{\set}{\rm{set}}
\newcommand{\Lip}{\operatorname{Lip}}

\newcommand{\mass}{{\mathbf M}}
\newcommand{\curr}{{\mathbf M}}         
\newcommand{\intrectcurr}{{\mathcal I}} 
\newcommand{\intcurr}{{\mathbf I}}      

\newcommand{\fillvol}{{\operatorname{FillVol}}}

\newcommand{\rstr}{\:\mbox{\rule{0.1ex}{1.2ex}\rule{1.1ex}{0.1ex}}\:}
\newcommand{\bdry}{\partial}
\newcommand{\spt}{\operatorname{spt}}


\begin{document}

\title{On the Sormani-Wenger Intrinsic Flat Convergence of Alexandrov Spaces}

\author{Nan Li}
\address{Department of Mathematics, The City University of New York, 300 Jay Street, Brooklyn, NY, 11201}
\email{lilinanan@gmail.com, NLi@citytech.cuny.edu}

\author{Raquel Perales}
\address{Conacyt Research Fellow, Instituto de Matematicas, UNAM, Oaxaca}
\email{raquel.peralesaguilar@gmail.com}
\thanks{The first author was partially supported by the research funds managed by Penn State University. The second author's research was funded in part by Prof. Christina Sormani's NSF Grant DMS 10060059. In addition, the second author received funding from Stony Brook as a doctoral student.}
\newcommand{\rp}[1]{{\textcolor{blue}{#1}}}

\date{\today}

\begin{abstract}
We study sequences of  integral current spaces $(X_j,d_j,T_j)$ such that the integral current structure $T_j$ has weight $1$ and no boundary and, all $(X_j,d_j)$ are closed Alexandrov spaces with curvature uniformly bounded from below and diameter uniformly bounded from above.  We prove that for such sequences either their limits collapse or the Gromov-Hausdorff and Sormani-Wenger Intrinsic Flat limits agree.  The latter is done showing that the lower $n$ dimensional density of the mass measure at any regular point of the Gromov-Hausdorff limit space is positive by passing to a filling volume estimate. 

In an appendix we show that the filling volume of the standard $n$ dimensional integral current space coming from an  $n$ dimensional sphere of radius $r>0$ in Euclidean space  equals $r^n$ times the filling volume of the $n$ dimensional integral current space coming from the $n$ dimensional sphere of radius $1$. 

\end{abstract}
\maketitle{}


\section*{Introduction}

Burago, Gromov and Perelman proved that 
sequences of Alexandrov spaces with curvature 
uniformly bounded from below, diameter and dimension uniformly bounded from above, have subsequences which converge
in the Gromov-Hausdorff sense to an Alexandrov space with
the same curvature and diameter bounds.  The properties of Alexandrov spaces and their Gromov-Hausdorff limit spaces have been amply studied by Alexander-Bishop \cite{AlexanderBishop}, Alexander-Kapovitch-Petrunin  \cite{AKP}, Burago-Gromov-Perelman \cite{BGP}, Burago-Burago-Ivanov \cite{BBI}, Li-Rong \cite{LR10}, Otsu-Shioya  \cite{OS}, 
and many others.

The Gromov Hausdorff distance between two metric spaces, $X_i$, is defined as,
\be
d_{GH}\left(X_1,X_2\right) = \inf  \{ d^Z_H\left(\varphi_1\left(X_1\right), \varphi_2\left(X_2\right)\right) \},
\ee
where $d^Z_H$ denotes the Hausdorff distance and the infimum is taken over all complete metric spaces $Z$  and all distance preserving maps $\varphi_i: X_i \to Z$, \cite{Gro81}. 
Sormani-Wenger's intrinsic flat distance between integral current spaces is defined in imitation of Gromov-Hausdorff distance:
it replaces the Hausdorff distance $d_H$ by Federer-Fleming's flat distance $d_F$ \cite{CS11}.  The notion of  flat convergence  was first introduced in work by Whitney \cite{Whit} and extended to integral currents on Euclidean space by
Federer-Fleming \cite{FF}.  Then the notion of  integral current on Euclidean space was extended by Ambrosio-Kirchheim to integral currents on
complete metric spaces \cite{AK}.   Sormani-Wenger proved that sequences of $n$ dimensional integral current spaces  that are equibounded and have diameter and total mass uniformly bounded from above have subsequences that converge in both Gromov-Hausdorff and intrinsic flat sense. Either the intrinsic flat limit is contained in the Gromov-Hausdorff limit or the intrinsic flat limit is the zero integral current space \cite[Theorem  3.20]{CS11}. For example, a sequence of collapsing tori converges in Gromov-Hausdorff sense to a circle. Since the Hausdorff dimension of the circle does not coincide with the Hausdorff dimension of the tori, the intrinsic flat limit is the zero integral current space.  

The intrinsic flat and Gromov-Hausdorff limits have been proved to agree under certain special conditions.
Sormani-Wenger proved that  the Gromov-Hausdorff  and intrinsic flat limits agree for compact oriented sequences of $n$ dimensional Riemannian manifolds that have nonnegative Ricci curvature,  volume uniformly bounded from below and from above by positive constants and a uniform diameter bound \cite[Theorem 7.1]{CS10}.  The second author showed a similar result for manifolds with boundary  \cite{Perales-2}.  Matveev-Portegies provided a new proof of Sormani-Wenger's aforementioned result that allows for any lower Ricci curvature bound \cite{MP}.  In a more general setting and with different techniques, Honda obtains as a corollary Matveev-Portegies result \cite{Ho}.  Portegies-Sormani proved a Tetrahedral Compactness Theorem which shows that both limits agree for sequences of spaces that satisfy uniform tetrahedral property bounds \cite{PorSor}. The second author with NunezZimbron extended the notion of tetrahedral property and proved a Generalized Tetrahedral Compactness Theorem \cite{NuZiPe}.

The intrisic flat distance has also been studied by Jaramillo et al \cite{Ja}, Lakzian \cite{LakzianDiam}, Munn \cite{MunnInt}, Portegies \cite{PortegiesSemi} and the author \cite{Perales-1}, to mention a few.  Gromov proposes to use intrinsic flat distance 
to solve some of his conjectures \cite{Gro13}, \cite{Gro14}.

Before stating our theorem we recall that  Sormani-Wenger \cite{CS11}  defined
integer rectifiable current spaces,  $(Y,d,T)$, which are oriented countably $\mathcal{H}^n$ rectifiable metric spaces  $(Y,d)$ 
with an $n$ dimensional integer current structure $T$ on $\bar Y$ that satisfies the condition:
$
Y=\{y\in \bar{Y}:\, \liminf_{r\to 0} ||T||(B_r(y))/r^n>0\},
$
where $||T||$ is the mass measure of $T$.  Note that an integer rectifiable current, $T$,
is determined by the orientation and chart structure of $(Y,d)$ and an 
integer valued Borel weight function which in turn determines the mass measure $||T||$.  An integral current spaces $(Y,d,T)$ is an integer rectifiable current space whose boundary $\bdry T$ is an integer rectifiable current.
In this paper we assume that the Borel weight function determining the
integral current structure is $1$, in the sense of Definition \ref{def-weight}, and that there is no boundary, $\bdry T=0$.

\begin{thm}\label{thm-mainthm}
 Let $(X_j,d_j,T_j)$ be $n$ dimensional integral current spaces with weight one and $\bdry T_j=0$. Suppose that $(X_j,d_j)$ are closed Alexandrov spaces with curvature bounded below by $\kappa \in \mathbb R$ and $\diam(X_j) \leq D$.  Then either the sequence converges to the zero integral current space in the intrinsic flat sense 
\be 
(X_j,d_j,T_j) \Fto {\bf 0}
\ee
or a subsequence converges in the Gromov-Hausdorff sense and intrinsic flat sense to the same space:
\be
(X_{j_k},d_{j_k}) \GHto (X,d)
\ee
and
\be
(X_{j_k},d_{j_k},T_{j_k}) \Fto (X,d,T).
\ee
\end{thm}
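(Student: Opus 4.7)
The plan is to combine Burago-Gromov-Perelman's Gromov-Hausdorff compactness with Sormani-Wenger's intrinsic flat compactness and then resolve a collapse/non-collapse dichotomy using Bishop-Gromov volume comparison. By Burago-Gromov-Perelman a subsequence satisfies $(X_{j_k},d_{j_k})\GHto(X,d)$ with $(X,d)$ Alexandrov, $\sec\geq 0$ and $\diam(X)\leq D$. Since the weight is one, $\mass(T_j)=\hm^n(X_j)$, and Bishop-Gromov in Alexandrov spaces with $\sec\geq 0$ yields the uniform mass bound $\mass(T_j)\leq \omega_n D^n$. Sormani-Wenger compactness then produces a further subsequence with $(X_{j_k},d_{j_k},T_{j_k})\Fto(X_\infty,d_\infty,T_\infty)$, possibly the zero current space, and by Theorem~\ref{thm-IFinsideGH} both convergences can be realized by distance-preserving embeddings $\varphi_{j_k},\varphi_\infty,\varphi$ into a common metric space $Z$, with either $T_\infty=\mathbf{0}$ or $\varphi_\infty(X_\infty)\subseteq\varphi(X)$.

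Next, resolve the dichotomy. If $\hm^n(X_j)\to 0$ then $\mass(T_j)\to 0$, which forces $(X_j,d_j,T_j)\Fto\mathbf{0}$ along the entire sequence --- the first alternative. Otherwise, passing to a subsequence, $\hm^n(X_{j_k})\geq v>0$, and Bishop-Gromov gives
\begin{equation*}
\hm^n(B(x,r))\;\geq\;(r/D)^n\,v\qquad \text{for all } x\in X_{j_k},\ r\in(0,D].
\end{equation*}
Standard results on non-collapsed Alexandrov Gromov-Hausdorff limits then imply $\dim X=n$ and the volume continuity $\hm^n(X_{j_k})\to\hm^n(X)>0$.

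It remains to identify $(X_\infty,d_\infty,T_\infty)$ with $(X,d,T)$, where $T$ is the canonical weight-one integral current structure on $X$. For the reverse inclusion, given $x\in X$ pick $x_{j_k}\in X_{j_k}$ with $\varphi_{j_k}(x_{j_k})\to\varphi(x)$ in $Z$. The mass measures $\|T_{j_k}\|=\hm^n\rstr\varphi_{j_k}(X_{j_k})$ satisfy $\|T_{j_k}\|(B_Z(\varphi_{j_k}(x_{j_k}),r))\geq (r/D)^n v$ for all $r\in(0,D]$, and lower semicontinuity of mass on open sets under flat convergence yields $\|T_\infty\|(B_Z(\varphi(x),r))\geq c\,r^n$ for some $c>0$ and all sufficiently small $r$. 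The positive-density definition of integral current space then forces $\varphi(x)\in\varphi_\infty(X_\infty)$, so $\varphi(X)\subseteq\varphi_\infty(X_\infty)$ and hence $X_\infty=X$ in $Z$. For the weight: the integer weight $\theta\geq 1$ of $T_\infty$ on $X_\infty=X$ satisfies $\int_X\theta\,d\hm^n=\mass(T_\infty)\leq\liminf_k\mass(T_{j_k})=\hm^n(X)$, which forces $\theta=1$ almost everywhere.

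The main obstacle is the density transfer in the reverse inclusion --- obtaining a positive lower density for the limit mass measure $\|T_\infty\|$ at each point of $\varphi(X)$, not merely for $\hm^n$ on $\varphi(X)$. This requires a careful application of mass lower semicontinuity on open sets under flat convergence, together with control on the Hausdorff proximity of $\varphi_{j_k}(X_{j_k})$ and $\varphi(X)$ in $Z$ so that the concentric balls $B_Z(\varphi_{j_k}(x_{j_k}),r)$ can be compared with $B_Z(\varphi(x),r)$ uniformly in $k$.
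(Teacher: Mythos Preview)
Your reverse-inclusion step contains a genuine gap: ``lower semicontinuity of mass on open sets under flat convergence'' runs in the \emph{wrong direction} for what you need. If $S_j\to S$ flatly (hence weakly) in $Z$, the standard statement is $\|S\|(U)\le \liminf_j\|S_j\|(U)$ for open $U$; it gives an \emph{upper} bound on the limit mass, not a lower bound. So the uniform estimate $\|T_{j_k}\|(B_Z(\varphi_{j_k}(x_{j_k}),r))\ge (r/D)^n v$ does not, by itself, force $\|T_\infty\|(B_Z(\varphi(x),r))\ge c\,r^n$. This is not a technicality: flat convergence permits cancellation, and local mass can genuinely vanish in the limit even when it is uniformly bounded below along the sequence. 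The ``main obstacle'' you flag is precisely the point where the argument fails, and nothing else in the proposal supplies the missing lower bound.

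The paper avoids this by bounding a quantity that \emph{is} continuous under intrinsic flat convergence of balls: the intrinsic flat distance to $\mathbf 0$. Concretely, for $x\in R(X)$ and $x_j\to x$, Theorem~\ref{regular.Lip} furnishes $(1+\varepsilon)$-bi-Lipschitz maps $f_j\colon B_{\delta r}(x_j)\to W_j\subset\R^n$. Lemma~\ref{lem-distFlatLip} then controls $d_{\mathcal F}\big(M_j(r),M'_j(r)\big)$, where $M'_j(r)$ carries the standard Euclidean current structure; since $M'_j(r)$ contains a Euclidean ball of radius $(1-\varepsilon)\delta r$, one gets $d_{\mathcal F}(M'_j(r),\mathbf 0)\ge \omega_n((1-\varepsilon)\delta r)^n$, and the triangle inequality gives $d_{\mathcal F}(M_j(r),\mathbf 0)\ge C(n,r)(\delta r)^n$. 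Finally Theorem~\ref{thm-SorArzela} passes this to the limit, and $\|T\|(B_{\delta r}(x))\ge d_{\mathcal F}\big((B_{\delta r}(x),d,T\rstr B_{\delta r}(x)),\mathbf 0\big)$ yields the desired positive lower density. Only after establishing $R(X)\subset Y$ does the paper invoke Bishop--Gromov, combined with Otsu--Shioya's $\mathcal H^n(X\setminus R(X))=0$ and the mass formula of Lemma~\ref{lemma-weight}, to extend to all of $X$.

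Two smaller points. First, $\mass(T_j)=\mathcal H^n(X_j)$ is not quite correct: by Lemma~\ref{lemma-weight}, $\|T_j\|=\lambda\,\mathcal H^n\rstr X_j$ with $\lambda\in[n^{-n/2},2^n/\omega_n]$, so mass is only comparable to $\mathcal H^n$; this does not affect the bounds you need but does invalidate the equality. Second, the theorem only asserts that the intrinsic flat limit has canonical set $X$; identifying the weight of $T$ as $1$ is not required, and your argument for it again uses $\mass(T_\infty)=\int_X\theta\,d\mathcal H^n$ without the $\lambda$ factor.
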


The proof of Theorem  \ref{thm-mainthm} is an adaptation to Alexandrov spaces of Sormani-Wenger's proof  of the same result for Riemannian manifolds with nonnegative Ricci curvature.  Hence,  to overcome the potential problem coming from the fact that the mass measure is lower semicontinuous with respect to  intrinsic flat distance we work with the notion of filling volume which is continuous with respect to the aforementioned distance. For more details see Appendix  \ref{app}.  The proof applies work of Burago-Gromov-Perelman \cite{BGP},  Portegies-Sormani \cite{PorSor}, Ambrosio-Kirchheim \cite{AK},  Otsu-Shioya \cite{OS} and Sormani-Wenger \cite{CS11}. 

Recently Jaramillo et al \cite{Ja} applying work of Mitsuichi \cite{M2} proved that $n$ dimensional closed oriented Alexandrov spaces  $(X,d)$ can be endowed with a $n$ dimensional integral current structure $T$ with weight one and no boundary such that $(X,d,T)$ is a $n$ dimensional integral current space. This shows the existence of sequences of integral current spaces as in Theorem \ref{thm-mainthm}.  Any compact oriented $n$ dimensional Riemannian manifold $(M,g)$ can easily be endowed with a current structure $T$ coming from the orientation of $M$ such that $(M,d_g, T)$ is a $n$ dimensional integral current space with weight one and $\bdry T = \emptyset$ if $\bdry M= \emptyset$. Honda defined orientability for metric spaces $(X,d)$ arising as (pointed) Gromov-Hausdorff limits of oriented manifolds with a uniform Ricci lower bound. Using the orientation on $(X,d)$, when the limit space is non collapsed, he shows that there is a current $T$ on $X$ such that $(X,d,T)$ is an integral current space with weight one and no boundary \cite{Ho}. Hence, obtaining integral current spaces as in Theorem \ref{thm-mainthm}
in the case the limit space comes from noncollapsing sequences of closed oriented manifolds with sectional curvature uniformly bounded.  

We conjecture that for oriented compact Alexandrov spaces with boundary and nonnegative curvature Theorem \ref{thm-mainthm} also holds replacing $\bdry T_j=0$ by $\bdry (X_j,d_j,T_j)$ is a $n-1$ dimensional integral current space with weight $1$ and $\set (\bdry T_j)= \bdry X_j$.

%


This article is organized as follows, in Section \ref{sec-AlexandrovSpaces} we give the definition of Alexandrov spaces and state the Bishop-Gromov Volume Comparison theorem for them. In Section \ref{sec-GHconvergence} we review Gromov-Hausdorff convergence of sequences of Alexandrov spaces. In Section \ref{sec-Currents} we define integral current spaces. In Section \ref{sec-IFconvergence} we cover intrinsic flat distance. In  Section \ref{sec-proofMainT}, we calculate upper estimates for the mass of the currents and then prove Theorem  \ref{thm-mainthm}. In Appendix \ref{app},  we show that the filling volume of the  standard $n$ dimensional integral current space coming from a $n$ dimensional sphere of radius $r>0$ in Euclidean space equals $r^n$ times the filling volume of the $n$ dimensional integral current space coming from the $n$ dimensional sphere of radius $1$.

We would like to thank Yuri Burago for pointing out
the key theorems needed from Alexandrov Geometry: theorems
reviewed in the tenth chapter of his textbook with Dimitri
Burago and Sergei Ivanov \cite{BBI}. We also would like to thank Christina Sormani for all her suggestions. We thank the ICMS for the Summer School for Ricci curvature: limit spaces and Kahler geometry where we began the project. We also thank the Fields Institute where we finalized this project.  The second author thanks the MSRI for having her as a visitor, the Stony Brook Math Department for the Summer Research Award granted to her, and J. Portegies for his insightful remarks.


\section{Alexandrov Spaces}\label{sec-AlexandrovSpaces}

For equivalent definitions and further information see  Alexander-Kapovitch-Petrunin
 \cite{AKP} and Burago-Burago-Ivanov \cite{BBI}.

Let $\mathbb M_\kappa^n$ denote the $n$ dimensional complete simply connected space of constant sectional curvature equal to $\kappa$. Given three points $a,b$ and $c$ in a length metric space $X$, the triangle 
$\triangle_\kappa \tilde a\tilde b\tilde c  \subset \mathbb M_\kappa^n$ that satisfies
\be
d(a,b)=d(\tilde a, \tilde b),\,  \,\,\,\, d(a,c)=d(\tilde a, \tilde c)\,\,\,\text{and}\,\,d(b,c)=d(\tilde b, \tilde c)
\ee
is called a comparison triangle. We denote by  $\tang{\kappa}{a}{b}{c}$ the angle of $\triangle_\kappa \tilde a\tilde b\tilde c$ at $\tilde a$.

\begin{defn}[From {\cite{BGP}}]
  A complete length metric space $X$ is said to be an Alexandrov space of curvature greater or equal than $\kappa$ if for any $x\in X$ there exists an open neighborhood $U_x$ such that for any four points $p, a, b, c \in U_x$ the quadruple condition holds. 
Namely,  
\begin{equation}
\tang{\kappa}{p}{a}{b}+\tang{\kappa}{p}{a}{c}+\tang{\kappa}{p}{b}{c}\le2\pi.
\end{equation}
We denote by $\Alexnk$ the class of  Alexandrov spaces with curvature bounded from below by $\kappa$ and Hausdorff dimension $\leq n$ and set $\Alex^n(\kappa,D)=\{X\in\Alexnk\,|\, \diam(X)\le D\}$.
\end{defn}

The following Proposition appears as a comment in Section 10.1 in \cite{BBI}.

\begin{prop}\label{prop-F&G}
Let $D_\kappa < \diam (\mathbb M_\kappa^n)/2$. The function $F_\kappa: (0, D_\kappa)^3 \to \R$ given  by 
\be 
F_\kappa(d_1,d_2,d_3) = \tang{\kappa}{\tilde a}{ \tilde b}{\tilde c},
\ee
where  $\tilde a, \tilde b, \tilde c  \in \mathbb M_\kappa^n$ such that 
\be
d(\tilde a, \tilde b)=d_1,\,  \,\,\,\, d(\tilde a, \tilde c)=d_2\,\,\,\text{and}\,\,d(\tilde b, \tilde c)=d_3
\ee
is continuous. 
\end{prop}

\begin{proof}
There exists a geodesic triangle in $\mathbb M_\kappa^n$ with length sides $d_1,d_2,d_3$ provided 
$d_1+d_2+d_3 < 2D_\kappa$.  Hence, $F_\kappa$ is well defined. 
To see that it is continuous,  it is enough to recall the law of cosines for $\kappa=1,0,-1$,  
\be
\cos(d_3)= \cos(d_1)\cos(d_2) +  \sin(d_1)\sin(d_2)\cos( \tang{1}{\tilde a}{ \tilde b}{\tilde c} ),
\ee
\be
d_3^2= d_1^2 + d_2^2 - 2d_1d_2\cos( \tang{0}{\tilde a}{ \tilde b}{\tilde c})
\ee
and 
\be
\cosh(d_3)= \cosh(d_1)\cosh(d_2) -  \sinh(d_1)\sinh(d_2)\cosh( \tang{-1}{\tilde a}{ \tilde b}{\tilde c} ).
\ee
These functions are continuous as functions of $(d_1,d_2,d_3)$ and  the functions that multiply $\cos( \tang{k}{\tilde a}{\tilde b} {\tilde c} )$ are never zero for $d_i \in  (0, D_\kappa)$.  This concludes the proof. 
\end{proof}

Burago-Gromov-Perelman proved that Bishop-Gromov Volume Comparison holds for Alexandrov spaces.
 
\begin{thm}[Burago-Gromov-Perelman, 10.13 in \cite{BGP}, c.f. Theorem 10.6.6 in \cite{BBI}]\label{thm-BishopGromov}
Let $X$ be a $n$ dimensional  Alexandrov space of curvature $\geq \kappa$.  Let $V_{n,\kappa} (r)$ denote the volume of a ball of radius $r$ in $\mathbb M_\kappa^n$. Then for all $x \in X$ the function, 
\be
r \,\,\, \,\, \longmapsto \,\, \frac {\mathcal{H}^n(B_r(x))}  {V_{n,\kappa} (r)},
\ee
is non-increasing. 
\end{thm}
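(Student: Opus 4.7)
The plan is to reduce the stated volume-ratio monotonicity to a pointwise Jacobian-type triangle comparison, and then recover the integrated form via the coarea formula. First I would fix $x \in X$ and set up polar coordinates at $x$ using the space of directions $\Sigma_x$ (a compact Alexandrov space of curvature $\geq 1$ and Hausdorff dimension $n-1$). For each $\xi \in \Sigma_x$ let $R(\xi) \in (0,\infty]$ be the maximal length of a minimizing unit-speed geodesic $\gamma_\xi$ leaving $x$ in the direction $\xi$, and define a logarithm-like map $E \colon \{(\xi,t) : 0 \le t < R(\xi)\} \to X$ by $E(\xi,t) = \gamma_\xi(t)$. Standard regularity results from Burago-Gromov-Perelman together with Otsu-Shioya imply that $E$ is surjective onto $X$ up to a set of $\mathcal{H}^n$-measure zero (the cut locus of $x$).

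Next I would invoke the quadruple angle condition, equivalently Toponogov's hinge comparison, to establish the global comparison
\[
d\bigl(\gamma_\xi(t),\gamma_\eta(s)\bigr) \;\le\; \tilde d_\kappa\bigl(t,s,d_{\Sigma_x}(\xi,\eta)\bigr)
\]
for all $\xi,\eta \in \Sigma_x$ and all $t<R(\xi)$, $s<R(\eta)$, where $\tilde d_\kappa$ denotes the $\mathbb{M}_\kappa^n$-distance between two points at distances $t,s$ from a common basepoint subtending angle $d_{\Sigma_x}(\xi,\eta)$ there. Equivalently, $E$ is $1$-Lipschitz as a map from the $\kappa$-cone $C_\kappa(\Sigma_x)$ into $X$.

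Applying the coarea formula to the $1$-Lipschitz function $d(x,\cdot)$ on the countably $\mathcal{H}^n$-rectifiable space $X$ yields $\mathcal{H}^n(B_r(x)) = \int_0^r \mathcal{H}^{n-1}(S_t(x))\,dt$. Parametrizing $S_t(x)$ through $\xi \mapsto \gamma_\xi(t)$ and applying the above comparison to infinitesimally close directions translates into the monotonicity of $t \mapsto \mathcal{H}^{n-1}(S_t(x))/\snk(t)^{n-1}$. A standard calculus lemma---if $f/g$ is non-increasing and both are positive, then so is $\int_0^r f$ divided by $\int_0^r g$---applied with $g(t) = \omega_{n-1}\snk(t)^{n-1}$, the area of the model sphere of radius $t$, then yields the desired monotonicity of $\mathcal{H}^n(B_r(x))/V_r^\kappa$.

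The main obstacle is the rigorous passage from the global non-expansion of the second step to the pointwise differential inequality on the area ratio $\mathcal{H}^{n-1}(S_t(x))/\snk(t)^{n-1}$. This requires the rectifiable structure of distance spheres in Alexandrov spaces (from Otsu-Shioya), measurability of $R(\xi)$ and of the relevant Jacobians, and the $\mathcal{H}^n$-negligibility of the cut locus, together with a careful verification that the metric-measure coarea formula applies to $d(x,\cdot)$ on $X$. Once this measure-theoretic underpinning is in place, the monotonicity reduces to a short calculation combining the Toponogov comparison with the calculus lemma.
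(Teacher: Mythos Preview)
The paper does not prove this statement at all: Theorem~\ref{thm-BishopGromov} is quoted as Theorem~10.6.6 of \cite{BBI} and then used as a black box, once in Lemma~\ref{TLeH} and once at the very end of the proof of Theorem~\ref{thm-mainthm}. There is therefore no proof in the paper to compare your proposal against.

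As an independent remark on your outline: the strategy is one of the standard routes to Bishop--Gromov in Alexandrov geometry, but be aware that the argument actually given in \cite{BBI} is more elementary and avoids the coarea formula and the rectifiable structure of metric spheres. In your sketch the genuinely delicate point is the \emph{equality}
\[
\mathcal{H}^n(B_r(x)) \;=\; \int_0^r \mathcal{H}^{n-1}(S_t(x))\,dt.
\]
For a $1$-Lipschitz function the Eilenberg--type coarea inequality only yields one inequality; upgrading it to an equality requires that the metric differential of $d(x,\cdot)$ have norm one $\mathcal{H}^n$-almost everywhere, which in turn rests on the Otsu--Shioya a.e.\ Riemannian structure you cite. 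Once that is secured, your passage from the hinge comparison to monotonicity of $\mathcal{H}^{n-1}(S_t(x))/\snk(t)^{n-1}$ and then to the ball ratio via the integral calculus lemma is correct. The textbook proof in \cite{BBI} sidesteps this analytic layer by working with packing numbers (rough volume) and a direct radial contraction whose Lipschitz constant is controlled by Toponogov, which is why the result there does not depend on \cite{OS}.
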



\section{Gromov-Hausdorff Convergence of Alexandrov Spaces}\label{sec-GHconvergence}

Given a complete metric space $Z$, the Hausdorff distance between two subsets $A$ and $B$ of $Z$ is given by
\be
d_{H}^Z\left(A,B\right) = \inf\{ \varepsilon>0\,|\, A \subset T_\varepsilon\left(B\right) \textrm{ and } B \subset T_\varepsilon\left(A\right)\},
\ee
where $T_\varepsilon\left(A\right)$ denotes the $\varepsilon$-tubular neighborhood of $A$. 

The Hausdorff distance is generalized to metric spaces $(X_i, d_i)$ in the following way.

\begin{defn}[Gromov]\label{defn-GH}
Let $\left(X_i, d_{X_i}\right)$, $i=1,2$, be two metric spaces. The Gromov-Hausdorff distance between them is defined as
\be \label{eqn-GH-def}
d_{GH}\left( X_1,X_2 \right) = \inf \{d^Z_H\left(\varphi_1 \left(X_1\right), \varphi_2\left(X_2\right)\right)\}
\ee
where the infimum is taken over all complete metric spaces $Z$ and all isometric embeddings $\varphi_i: X_i \to Z$.
\end{defn}

The function $d_{GH}$ is symmetric and satisfies the triangle inequality.
It becomes a distance when restricted to compact metric spaces.  The following compactness theorem is due to Gromov.

\begin{thm}[Gromov]\label{thm-GromovCompactness}
Let $(X_j, d_j)$ be a sequence of compact metric spaces. If there exist $D >0$, and $N:(0,\infty) \to \N$  such that for all $j$
\be
\diam(X_j) \leq D
\ee
and for all $\varepsilon$ there are $N(\varepsilon)$  $\varepsilon$-balls that cover $X_j$, then 
a subsequence of $X_j$  converges to a compact metric space $(X_\infty, d_\infty)$ in Gromov-Hausdorff sense,
\be
(X_{j_k}, d_{j_k}) \GHto (X_\infty, d_\infty).
\ee
\end{thm}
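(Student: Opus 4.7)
\medskip

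\noindent\textbf{Proof proposal.} The plan is to carry out Gromov's classical ``uniformly totally bounded implies precompact'' argument via a Cantor diagonal extraction on finite nets. First, for each $k \in \N$ and each $j$, choose a $(1/k)$-net $S_{j,k} \subset X_j$ with $\#S_{j,k} \leq N(1/k)$, and arrange it as an ordered tuple $(x^{j,k}_1, \ldots, x^{j,k}_{N(1/k)})$ (repeating points if necessary so that the cardinality is exactly $N(1/k)$). Nest these by taking $S_{j,1} \subset S_{j,2} \subset \cdots$ at the cost of enlarging $N$ to $N'(1/k) := \sum_{\ell \leq k} N(1/\ell)$; this is still finite, so the hypothesis is preserved.

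Next, since $\diam(X_j) \leq D$, every pairwise distance $d_j(x^{j,k}_a, x^{j,k}_b)$ lies in $[0,D]$. For fixed $k$ there are only finitely many pairs $(a,b)$, so by a diagonal argument we can pass to a subsequence (which, abusing notation, we still call $\{X_j\}$) such that for every $k$ and every $a,b$,
\begin{equation}
d_j(x^{j,k}_a, x^{j,k}_b) \longrightarrow \ell^k_{a,b} \in [0,D] \quad \text{as } j \to \infty.
\end{equation}
Define a set $S_\infty := \bigsqcup_k \{z^k_1, \ldots, z^k_{N'(1/k)}\}$ and a pseudometric on it by $\hat d(z^k_a, z^{k'}_b) := \lim_j d_j(x^{j,k}_a, x^{j,k'}_b)$, where the limit exists after a further diagonal extraction. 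Quotienting by the equivalence relation $\hat d(z,z')=0$ and taking the metric completion yields a complete metric space $(X, d_X)$ which, by construction, contains a dense $(1/k)$-net of at most $N'(1/k)$ points for every $k$, hence is compact.

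To verify $(X_{j_k}, d_{j_k}) \GHto (X, d_X)$, fix $\varepsilon > 0$ and choose $k$ with $1/k < \varepsilon/3$. For all large $j$ the distances among the $N'(1/k)$ points in $S_{j,k}$ are within $\varepsilon/3$ of the corresponding distances in the image of $S_{j,k}$ inside $X$. Then the map sending $x^{j,k}_a \mapsto [z^k_a]$ extends (via nearest-point assignment from the $(1/k)$-net) to a correspondence between $X_j$ and $X$ whose distortion is at most $\varepsilon$. Realizing this correspondence as a pair of isometric embeddings into an admissible metric on the disjoint union $X_j \sqcup X$ (with cross-distances read off from the correspondence and padded by $\varepsilon$) yields $d_H^Z(\varphi_1(X_j), \varphi_2(X)) \leq \varepsilon$, and hence $d_{GH}(X_j, X) \to 0$.

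The only real subtlety lies in the diagonal extraction: one must carefully order the countable collection of pairs $(k,k',a,b)$ and extract nested subsequences so that the final diagonal subsequence realizes every limit $\ell^{k,k'}_{a,b}$ simultaneously. Once this bookkeeping is in place, verifying that $\hat d$ descends to a genuine metric on the quotient (symmetry and the triangle inequality pass to the limit from $d_j$) and that the completion is compact (because each $(1/k)$-net is finite with a uniform bound) are routine, and the convergence follows by the correspondence argument sketched above.
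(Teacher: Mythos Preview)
Your proof proposal is essentially correct: it is the classical Gromov argument via nested finite nets, diagonal extraction of pairwise distances, construction of the limit as a quotient/completion, and verification of convergence through $\varepsilon$-correspondences. The bookkeeping you flag (diagonalizing over all pairs $(k,k',a,b)$) is indeed the only place requiring care, and you have identified it correctly.

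However, there is nothing to compare against: the paper does \emph{not} prove this theorem. It is stated as a background result attributed to Gromov, with the reference \cite{Gromov}, and is then used as a tool (e.g., to deduce Theorem~\ref{thm-alexComp} and later Theorem~\ref{thm-IFinsideGH}). So your write-up supplies a proof where the paper simply cites one.
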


In  \cite{Gromov} Gromov proved that if a sequence  of compact metric spaces $X_j$ converges in Gromov-Hausdorff sense to $X_\infty$ then there is a compact metric space $Z$ and isometric embeddings 
$\varphi_j : X_j \to Z$ such that for a subsequence 
\be 
d^Z_H(\varphi_j (X_j), \varphi_\infty (X_\infty)) \to 0.
\ee
Thus we say that a sequence of points $x_j \in X_j$ converges to $x \in X_\infty$ if
$\varphi_j(x_j)$ converges to $\varphi_\infty(x)$ in $Z$. 

Recall that  $\AlexnkD$ denotes the class of Alexandrov spaces with curvature $\geq \kappa$, diameter $\leq D$ and Hausdorff dimension $\leq n$. Gromov's compactness theorem can be applied to prove convergence of sequences of Alexandrov spaces.

\begin{thm}[Burago-Gromov-Perelman, 8.5 in \cite{BGP}]\label{thm-alexComp}
Let $X_j$ be a sequence of compact Alexandrov spaces contained in $\AlexnkD$. Then there is a subsequence $X_{j_k}$ and a compact Alexandrov space $X \in \AlexnkD$ such that 
\be 
X_{j_k} \GHto X.
\ee
\end{thm}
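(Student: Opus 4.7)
The plan is to reduce the statement to Gromov's compactness theorem (Theorem \ref{thm-GromovCompactness}). That result requires a uniform diameter bound, which is built into the class $\AlexnkD$, together with, for every $\varepsilon > 0$, a uniform upper bound $N(\varepsilon)$ on the minimal number of $\varepsilon$-balls needed to cover each $X_j$. Once a Gromov-Hausdorff convergent subsequence $X_{j_k} \GHto X$ is produced, I would verify that the limit inherits the diameter bound, the Hausdorff dimension bound, and the curvature bound.

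For the covering estimate, fix $\varepsilon > 0$ and pick a maximal $\varepsilon$-separated subset $\{p_1,\dots,p_{N_j}\} \subset X_j$, so that the balls $B_{\varepsilon/2}(p_i)$ are pairwise disjoint while the balls $B_\varepsilon(p_i)$ cover $X_j$. Since $\diam(X_j)\le D$ gives $X_j \subset B_D(p_i)$ for every $i$, Bishop-Gromov (Theorem \ref{thm-BishopGromov}) applied between radii $\varepsilon/2$ and $D$ yields
\[
\mathcal{H}^n\bigl(B_{\varepsilon/2}(p_i)\bigr) \;\ge\; \frac{V^\kappa_{\varepsilon/2}}{V^\kappa_D}\,\mathcal{H}^n(X_j).
\]
Summing over the disjoint balls and dividing by $\mathcal{H}^n(X_j)>0$ (which holds because $X_j$ is $n$-dimensional) gives $N_j \le V^\kappa_D/V^\kappa_{\varepsilon/2}$, a bound depending only on $n,\kappa,D,\varepsilon$. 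Theorem \ref{thm-GromovCompactness} then produces a subsequence $X_{j_k} \GHto X$ with $\diam(X)\le D$. Since $V^\kappa_{\varepsilon/2}$ scales like $\varepsilon^n$ for small $\varepsilon$, the same estimate shows that the $\varepsilon$-covering number of $X$ grows at most like $\varepsilon^{-n}$, so $\mathcal{H}^n(X)<\infty$ and $\dim_H(X)\le n$.

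To show that $X$ is itself Alexandrov with curvature $\ge \kappa$, take any four points $p,a,b,c \in X$ and choose approximating quadruples $p^k,a^k,b^k,c^k \in X_{j_k}$ via the $\varepsilon_k$-isometries realizing the Gromov-Hausdorff convergence, so that all six pairwise distances converge. Since the comparison angle $\tang{\kappa}{\cdot}{\cdot}{\cdot}$ in $\mathbb M^n_\kappa$ depends continuously on those distances, the quadruple inequality at each $X_{j_k}$ passes to the limit and yields the desired inequality in $X$ (which is a complete length space as a Gromov-Hausdorff limit of compact length spaces).

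The main obstacle is that the definition of curvature $\ge\kappa$ in \cite{BGP} is local: the quadruple condition is only postulated in some neighborhood $U_x$ of each point, while the approximating quadruples $p^k,a^k,b^k,c^k$ need not lie in any common such neighborhood at the level $X_{j_k}$. Overcoming this requires the globalization theorem of Burago-Gromov-Perelman, which on a complete length space promotes the local quadruple condition to its global form (equivalently, global Toponogov triangle comparison). With the global form in hand at each $X_{j_k}$, the limiting argument above is legitimate and produces $X \in \Alexnk$ with $\diam(X)\le D$ and $\dim_H(X)\le n$, as required.
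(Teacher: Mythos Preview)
The paper does not supply its own proof of Theorem \ref{thm-alexComp}; it is quoted as a result of Burago--Gromov--Perelman (reference \cite{BGP}, item 8.5) and used as a black box in Section~\ref{sec-proofMainT}. So there is no in-paper argument to compare your proposal against.

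That said, your sketch is the standard one and is essentially correct. The packing/covering bound via Bishop--Gromov (Theorem \ref{thm-BishopGromov}) gives the uniform $N(\varepsilon)$ needed for Theorem \ref{thm-GromovCompactness}; the diameter bound passes to the limit immediately; the covering estimate $N(\varepsilon)\le C\varepsilon^{-n}$ transfers to $X$ and bounds the upper box dimension, hence the Hausdorff dimension, by $n$. You were right to flag the local-versus-global issue: the definition quoted in the paper is local, and passing the quadruple inequality through a GH limit requires the global form. Invoking the Toponogov globalization theorem of \cite{BGP} at the level of each $X_{j_k}$ is exactly the right fix, and the continuity of $\tang{\kappa}{\cdot}{\cdot}{\cdot}$ in the side lengths then makes the limiting argument go through. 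One small point worth making explicit is why $\mathcal{H}^n(X_j)>0$, so that the division in your packing estimate is legitimate; this holds because an $n$-dimensional Alexandrov space contains an open set bi-Lipschitz to a domain in $\R^n$ (e.g.\ by Theorem \ref{BGP9.4}).
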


\begin{thm}[Burago-Gromov-Perelman, 10.8 in \cite{BGP}, cf. Corollary 10.10.11 in \cite{BBI}]\label{cor-collapsingSeq}
Let $X_j$ be a sequence of Alexandrov spaces contained in $\AlexnkD$ that converges in Gromov-Hausdorff sense to  $X$.  Then $\mathcal \lim_{j \to \infty} \mathcal{H}^n(X_j)=0$ if and only if $\dim_H X < n$.
\end{thm}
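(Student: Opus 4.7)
The plan is to prove both implications using the Bishop-Gromov volume comparison (Theorem \ref{thm-BishopGromov}) together with Theorem \ref{thm-alexComp} and the structural fact that an $n$-dimensional Alexandrov space has positive $\mathcal{H}^n$-measure on nonempty open sets.

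For the direction $\dim_H X < n \Rightarrow \mathcal{H}^n(X_j) \to 0$, I would use a covering argument. Since $\mathcal{H}^n(X) = 0$, for each $\epsilon > 0$ one can cover $X$ by finitely many balls $B_{r_i}(y_i)$, $i = 1, \ldots, N$, with each $r_i$ smaller than some $r_0 = r_0(\kappa)$ and $\sum_i r_i^n < \epsilon$. Using the common ambient metric space produced by GH convergence, for large $j$ the Hausdorff distance $d^Z_H(X_j, X) < \delta_j \to 0$, so $X_j \subseteq \bigcup_i B_{r_i + \delta_j}(y_i^j)$ for points $y_i^j \in X_j$ chosen close to $y_i$. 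Bishop-Gromov (applied by letting the smaller radius tend to zero, and using that the resulting ratio $\mathcal{H}^n(B_s(y))/V_s^\kappa$ is bounded by $1$ on an $n$-dimensional Alexandrov space) gives the pointwise upper bound $\mathcal{H}^n(B_r(y)) \leq V_r^\kappa \leq C(\kappa) r^n$ for all $r \leq r_0$. Summing over the cover yields $\mathcal{H}^n(X_j) \leq C \sum_i (r_i + \delta_j)^n$; sending $j \to \infty$ and then $\epsilon \to 0$ forces $\mathcal{H}^n(X_j) \to 0$.

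For the converse $\mathcal{H}^n(X_j) \to 0 \Rightarrow \dim_H X < n$, I would argue by contradiction. Assume $\dim_H X = n$; since $\dim_H X \leq n$ by Theorem \ref{thm-alexComp}, this is the only alternative. Then $\mathcal{H}^n(X) > 0$ and $X$ contains a regular point $x$ whose tangent cone is $\R^n$, so $\mathcal{H}^n(B_r(x)) \geq (1 - o(1))\omega_n r^n$ as $r \to 0$. Choose $x_j \in X_j$ with $x_j \to x$. By BGP's stability theorem at regular points, the balls $B_r(x_j)$ converge in GH to a Euclidean $r$-ball for small fixed $r$, and the volume lower bound transfers to give $\mathcal{H}^n(B_r(x_j)) \geq \tfrac12 \omega_n r^n$ for all large $j$. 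This forces $\mathcal{H}^n(X_j) \geq \tfrac12 \omega_n r^n > 0$, contradicting the hypothesis.

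The main obstacle is the converse direction: the naive Bishop-Gromov lower bound $\mathcal{H}^n(B_r(x_j)) \geq V_r^\kappa\,\mathcal{H}^n(X_j)/V_D^\kappa$ is useless under the assumption $\mathcal{H}^n(X_j) \to 0$, so one has no choice but to invoke the stronger BGP stability theorem near regular points (the same result referred to later in the paper as Theorem \ref{BGP9.4}) to produce a volume lower bound on $\mathcal{H}^n(B_r(x_j))$ that is uniform in $j$.
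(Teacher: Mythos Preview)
The paper does not prove this statement; it is quoted verbatim as Corollary~10.10.11 from Burago--Burago--Ivanov \cite{BBI} and serves only as background. There is therefore no proof in the paper to compare against.

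That said, your sketch is sound. The forward direction via a covering argument and the Bishop--Gromov upper bound is the standard route; the only point worth tightening is the claim that $\mathcal{H}^n(B_s(y))/V_s^\kappa \le 1$ as $s \to 0$, which follows from the fact that the tangent cone at any point of an $n$-dimensional Alexandrov space is a Euclidean cone over a space of diameter at most $\pi$. For the converse, your instinct is exactly right: a naive Bishop--Gromov lower bound is vacuous here, and one must instead exploit a regular point of the limit together with the BGP almost-isometry (Theorem~\ref{BGP9.4}, or equivalently Theorem~\ref{regular.Lip} in this paper) to obtain a uniform-in-$j$ lower volume bound on small balls $B_r(x_j)$. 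This is precisely the mechanism the paper later uses in the proof of Theorem~\ref{thm-mainthm}, so your argument is well aligned with the paper's toolkit even though the paper itself defers the proof of Theorem~\ref{cor-collapsingSeq} to \cite{BBI}. One small correction: from $\dim_H X = n$ alone one cannot conclude $\mathcal{H}^n(X) > 0$ for a general metric space, but you do not actually need that step---the existence of a regular point in an $n$-dimensional Alexandrov space $X$ comes directly from Otsu--Shioya (Theorem~\ref{thm-closureRegSet}).
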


Let $X$ be a $n$ dimensional Alexandrov space and $x \in X$.  If for any $\varepsilon>0$ there is $r>0$ such that 
\be 
d_{GH}(B_r(x),B_r(0))<\varepsilon r,
\ee
where $B_r(x)$ denotes the ball in X of radius $r$ centered at $x$  and $B_r(0)$ the ball in $\Bbb R^n$ of radius $r$ centered at $0$, then $x$ is called a regular point of $X$.  We denote by $R(X)$ the set of all regular points of $X$.

\begin{thm}[Corollary 10.9.13 in \cite{BBI}]\label{thm-denseRegSet}
Let $X$ be a $n$ dimensional Alexandrov space. Then $R(X)$ is dense in $X$.
\end{thm}

\begin{thm}[Otsu-Shioya, Theorem A in \cite{OS}]\label{thm-dimSingSet}
Let $X$ be a $n$ dimensional Alexandrov space. Then $X\setminus R(X)$ has Hausdorff dimension less or equal than $n-1$.
\end{thm}

Equivalently,   $x$ is a regular point of $X$  if for each $\delta > 0$ there is a $(n,\delta)$ strainer for $x$.  A $(n,\delta)$ strainer is a collection of points $\{(a_i,b_i)\}_{i=1}^n \subset X \times X$ that for $i\neq j$
\be
\tang{\kappa}{x}{a_i}{a_j}>\pi/2-\delta,\,\,\,\,\,\,\tang{\kappa}{x}{b_i}{b_j}>\pi/2-\delta,\,\,\tang{\kappa}{x}{a_i}{b_j}>\pi/2-\delta
\ee
and, for all $i$
\be
\tang{\kappa}{x}{a_i}{b_i}>\pi-\delta.
\ee

\begin{lem}\label{lem-d/2strainer}
Let $X$ be an Alexandrov space and $\{(a_i,b_i)\}_{i=1}^n$  a $(n,\delta)$ strainer for $x \in X$. Then,  there is $\varepsilon >0$
such that if $x', a'_i,b'_i \in X$, $i=1,...,n$,  satisfy
  \begin{align}
|d(a'_i,a'_k) - d(a_i,a_k)  |  <    \varepsilon \\ 
|d(b'_i,b'_k)  - (b_i,b_k)|     <   \varepsilon  \\
|d(a'_i,b'_i)  -  d(a_i,b_i) |   <  \varepsilon \\ 
|d(x', a'_i)  -  d(x, a_i)|   <    \varepsilon    \\
|d(x', b'_i)  -  d(x, b_i)| <  \varepsilon,
\end{align}
then $\{(a'_i,b'_i)\}_{i=1}^n$ is a $(n,\delta/2)$ strainer for $x'$.

\end{lem}

\begin{proof}
Apply Proposition \ref{prop-F&G} several times and that $\{(a_i,b_i)\}_{i=1}^n$  is a $(n,\delta)$ strainer for $x \in X$ to get 
$\varepsilon > 0$ such that for points $x', a'_i,b'_i \in X$, $i=1,...,n$ satisfying the hypotheses, then for $i \neq k$
\begin{align}
F_\kappa(d(x', a'_i), d(x',a'_k),d(a'_i,a'_k))= & \tang{\kappa}{x'}{a'_i}{a'_k}   > \pi/2- \delta/2\\
F_\kappa(d(x', b'_i), d(x',b'_k),d(b'_i,b'_k))=  & \tang{\kappa}{x'}{b'_i}{b'_k}  > \pi/2- \delta/2 \\ 
F_\kappa(d(x', a'_i), d(x',b'_k),d(a'_i,b'_k))=  & \tang{\kappa}{x'}{a'_i}{b'_k}  > \pi/2- \delta/2 \\
F_\kappa(d(x', a'_i), d(y,b'_i),d(a'_i,b'_i))=  & \tang{\kappa}{x'}{a'_i}{b'_i} > \pi- \delta/2.
\end{align}
\end{proof}

Given a $(n, \delta)$ strainer $\{(a_i,b_i)\}_{i=1}^n$ define $f : X \to \R^n$ by 
\begin{equation}
f (x)= (d( x,a_1),...,d(x,a_n)).
\end{equation}

\begin{thm}[Proposition 10.8.15 and Theorem 10.8.18 in \cite{BBI}]\label{thm-fbi}
Let $X$ be a $n$ dimensional Alexandrov space and  $\{(a_i,b_i)\}_{i=1}^n$ a $(n, \delta)$ strainer
for $x \in X$ with $\delta \leq 1/100n$.  Then there exists $U \subset X$ open, $x \in U$, such that for all $y \in U$ 
\begin{enumerate}
\item $\{(a_i,b_i)\}_{i=1}^n$ is also a $(n, \delta)$ strainer for $y$
\item for some $l >0$ and all $i$, $d(y,a_i), d(y,b_i) > l$.
\end{enumerate}
Furthermore, for any neighborhood $U$ of $x$ that satisfies (1) and $(2)$,  
$f|_{U}: U \to f(U)$ is a bilipschitz function with $\Lip(f) \leq \sqrt{n}$ and $\Lip(f^{-1})\leq 500n$.
\end{thm}

We combine the previous results to obtain a sequence of bilipschitz functions. 

\begin{lem}\label{lem-lipmaps}
Let $(X_j,d_j) \in \AlexnkD$ be a noncollapsing sequence of compact Alexandrov spaces that converges in Gromov-Hausdorff sense to $X$. Let $x \in R(X)$, then there exist a sequence of points $x_j \in X_j \to x$, $r_0 > 0$, open sets $W_j \subset \R^n$ and bilipschitz functions 
\begin{equation}
f_j: B_{r_0}(x_j) \to W_j
\end{equation}
such that $\Lip(f_j)\leq L$,   $\Lip(f^{-1}_j)\leq L$. 
\end{lem}

\begin{proof}
Take $\delta \leq 1/200n$. Since  $x \in R(X)$ there exist  a $(n, 1/200n)$ strainer  $\{(a_i,b_i)\}_{i=1}^n$ for $x$ and a function
$f : U \to W$ given by
\begin{equation}
f(x)= (d(x,a_1),...,d(x,a_n)),
\end{equation}
where $U$ satisfies (1) and (2) of Theorem \ref{thm-fbi}.

By Gromov's embedding theorem we can assume that all $X_j$ are contained in a compact metric space $(Z,d)$ in which they Hausdorff converge to X,
\begin{equation}\label{eq-Hconv}
d_H^Z(X_j,X) < \varepsilon_j  \,\,\,\text{such that}\,\,\, \varepsilon_j  \downarrow 0.
\end{equation}
Thus, there exist $x_j \in X_j$ and $\{(a^j_i,b^j_i)\}_{i=1}^n \subset X_j \times X_j$ such that 
$d(x_j, x) < \varepsilon_j$, $d(a^j_i,a_i) < \varepsilon_j$ and $d(b^j_i,b_i) < \varepsilon_j$.  Define $f_j: X_j \to \R^n$ by 
\begin{equation}
f_j(x)= (d( x ,a^j_1),...,d(x,a^j_n)).
\end{equation}

We claim that there is $r_0 > 0$ such that for $j$ big enough each $\{(a^j_i,b^j_i)\}_{i=1}^n$ satisfies conditions (1) and (2) of Theorem \ref{thm-fbi} with $U=B_{r_0}(x_j)$. First we show that there is $r >0$ such that for $j$ big enough, $\{(a^j_i,b^j_i)\}_{i=1}^n$ is a $(n, 1/100n)$ strainer for all $y \in B_r(x_j)$.
To obtain the estimates needed we apply Lemma \ref{lem-d/2strainer}. By the triangle inequality,
\begin{align}
|d(a^j_i,a^j_k) - d(a_i,a_k)  | \leq    d(a^j_i,a_i) + d(a_k,a^j_k)      <    2\varepsilon_j  \\ 
|d(b^j_i,b^j_k)  - (b_i,b_k)|     \leq  d(b^j_i,b_i)  + d(b_k,b^j_k)       <   2\varepsilon_j   \\
|d(a^j_i,b^j_i)  -  d(a_i,b_i) |  \leq    d(a^j_i,a_i)  -  d(b_i,b^j_i)     <  2\varepsilon_j .  
\end{align}

Since $U$ is open  and $x \in U$,  take $\overline{B_{2r}(x)} \subset U$.  By equation (\ref{eq-Hconv}), for $y \in B_r(x_j)$ there exist 
$y' \in X$ such that $d(y,y') < \varepsilon_j$.  By the triangle inequality,  $d(y',x) \leq d(y',y) + d(y,x_j) + d(x_j,x) < r + 2 \varepsilon_j$. Thus $y' \in B_{r + 2\varepsilon_j}(x)$ and 
\begin{align}
|d(y, a^j_i)  -  d(y', a_i)|  \leq  d(y,y')   + d(a^j_i, a_i) <    2\varepsilon_j    \\
|d(y, b^j_i)  -  d(y', b_i)| \leq   d(y, y')  +  d(b^j_i, b_i) <  2\varepsilon_j.  
\end{align}
Since $\varepsilon_j  \downarrow 0$ we can take $j$ big enough such that $B_{r + 2\varepsilon_j}(x) \subset B_{2r}(x)$, thus $y' \in B_{2r}(x) \subset U$.  Since $y' \in U$, from condition (1) of Theorem \ref{thm-fbi}, $\{(a_i,b_i)\}_{i=1}^n$ is a $(n, 1/200n)$ strainer for  $y'$.  Then by Lemma \ref{lem-d/2strainer}, for $j$ big enough $\{(a^j_i,b^j_i)\}_{i=1}^n$ is a $(n, 1/100n)$ strainer for $y$. This shows that for $j$ big enough, condition (1) of Theorem \ref{thm-fbi} is satisfied for any $y \in B_r(x_j)$.

To prove that condition (2) holds we choose $r_0 \leq r$ in the following way. 
Let $m= \min_{1 \leq i \leq n} \{ d(x,a_i),d(x,b_i)  \}$. Recall that (2) of Theorem \ref{thm-fbi} holds for $\{(a_i,b_i)\}_{i=1}^n$. Then, $m > l >0$. Define 
\begin{equation}
r_0 = \min\{   \tfrac1 2 (m - l), r\} >0.
\end{equation}
Take $y \in B_{r_0}(x_j)$. Using the triangle inequality, the definition of $r_0$ and,  $d(x,a_i)>m$, $d(x_j, x) < \varepsilon_j$ and  $d(a^j_i,a_i) < \varepsilon_j$ for all $i,j$, we obtain
\begin{align}
d(y,a^j_i) & \geq d(x,a_i) - d(y,x_j) - d(x_j, x) - d(a_i, a^j_i)  \\
&    \geq  m - \tfrac1 2 (m -  l)  - 2\varepsilon_j= l + \tfrac1 2 (m -  l)  - 2\varepsilon_j .
\end{align}
Thus, for $j$ big enough $d(y,a^j_i) > l$ for all $y \in B_{r_0}(x_j)$. Hence, for $j$ big enough condition (2) of Theorem \ref{thm-fbi} is satisfied with $\{(a^j_i,b^j_i)\}_{i=1}^n$ as $(n,\delta)$ strainer of $x_j$ and $B_{r_0}(x_j)$ as its neighborhood.  The proof finishes applying Theorem \ref{thm-fbi}. 
\end{proof}


\section{Integral Current Spaces}\label{sec-Currents}

In this section we review integral current spaces as presented in Section 2 of Sormani-Wenger \cite{CS11}.  We refer to their work for a complete exposition of integral current spaces. For readers interested in integral currents we refer to  Ambrosio-Kirchheim's paper \cite{AK} and \cite{CS11} for integral current spaces. 

Let $Z$ be a metric space. We denote by $\mathcal{D}^m(Z)$ the collection
of $(m+1)$-tuples of Lipschitz functions such that the first entry function is bounded:
\be
\mathcal{D}^m(Z)=\left\{ (f,\pi)=\left(f,\pi_1 ..., \pi_m\right)\, |\, f, \pi_i: Z \to \R \text{ Lipschitz and } f \,\text{ is bounded}\right\}.
\ee

\begin{defn}[Ambrosio-Kirchheim]
Let $Z$ be a complete metric space. A multilinear functional $T:\mathcal{D}^m(Z) \to \R$ is called an $m$ dimensional current if it satisfies:

i) If there is an $i$ such that $\pi_i$ is constant on a neighborhood of $\{f\neq0\}$ then $T(f, \pi)=0$.

ii) $T$ is continuous with respect to the pointwise convergence of the $\pi_i$ for  $\Lip(\pi_i)\le 1$.

iii) There exists a finite Borel measure $\mu$ on $Z$ such that for all $(f,\pi)\in \mathcal{D}^m(Z)$
\be\label{def-AK-current-iii}
|T(f,\pi)| \le \prod_{i=1}^m \Lip(\pi_i)  \int_Z |f| \,d\mu .
\ee
The collection of all m dimensional currents of $Z$ is denoted by $\curr_m(Z)$.
\end{defn}

\begin{defn}[Ambrosio-Kirchheim] \label{defn-mass}
Let $T:\mathcal{D}^m(Z) \to \R$ be an $m$-dimensional current. The mass measure of $T$ is the smallest Borel measure $\|T\|$  such that (\ref{def-AK-current-iii}) holds for all $(f,\pi) \in \mathcal{D}^m(Z)$.

The mass of $T$ is defined as
\be \label{def-mass-from-current}
\mass \left(T\right) = || T || \left(Z\right) = \int_Z \, d\| T\|.
\ee
\end{defn}

The most studied currents come from pushforwards and Example \ref{basic-current-pushed}.

\begin{defn}[Ambrosio-Kirchheim Defn 2.4]
Let $T\in \curr_m(Z)$ and $\varphi:Z\to Z'$ be a
Lipschitz map. The {\em pushforward} of $T$
to a current $\varphi_\# T \in \curr_m(Z')$ is given by
\be \label{def-push-forward}
\varphi_\#T(f,\pi)=T(f\circ \varphi, \pi_1\circ\varphi,..., \pi_m\circ\varphi).
\ee
\end{defn}

\begin{ex}[Ambrosio-Kirchheim]\label{basic-current}
Let $h: A \subset \R^m \to \Z$ be an $L^1$ function. Then $\Lbrack h \Rbrack:  \mathcal{D}^m(\R^m) \to \R$ given by
\be \label{def-current-from-function}
\Lbrack h \Rbrack \left(f, \pi\right) = \int_{A \subset \R^m}  h f \det\left(\nabla \pi_i\right) \, d\mathcal{L}^m
\ee
is an $m$ dimensional current with $|| T || (A) = \int_A |h| d\mathcal{L}^m$.
\end{ex}

\begin{ex}[Ambrosio-Kirchheim]\label{basic-current-pushed}
Let $h: A \subset \R^m \to \Z$ be an $L^1$ function and
$\varphi:  \R^m \to Z$ be a
bilipschitz map, then $\varphi_\# \Lbrack h \Rbrack \in \curr_m(Z)$. Explicitly,
\be
\varphi_\# \Lbrack h \Rbrack (f,\pi_1,..., \pi_m)=
\int_{A \subset \R^m} h \, (f\circ\varphi) \det\left(\nabla (\pi_i \circ \varphi)\right) \, d\mathcal{L}^m .
\ee
\end{ex}

\begin{rmrk}
$\nabla \pi_i$, $\nabla (\pi_i \circ \varphi)$ are defined almost everywhere by Rademacher's Theorem.
\end{rmrk}

Now we are ready to define integer rectifiable currents.

\begin{defn}[Definition 4.2, Theorem 4.5 in Ambrosio-Kirchheim \cite{AK}] \label{def-param-rep} 
Let $T\in \curr_m(Z)$. $T$ is an  integer rectifiable
current if it has a parametrization of the form $\left(\{\varphi_i\}, \{\theta_i\}\right)$, where

i) $\varphi_i:A_i\subset\R^m \to Z$ is a countable collection of bilipschitz maps
with $A_i$ precompact Borel measurable sets with pairwise disjoint images

ii) $\theta_i\in L^1\left(A_i,\N\right)$ and the following equations are satisfied 

\be\label{param-representation}
T = \sum_{i=1}^\infty \varphi_{i\#} \Lbrack \theta_i \Rbrack \quad\text{and}\quad \mass\left(T\right) = \sum_{i=1}^\infty \mass\left(\varphi_{i\#}\Lbrack \theta_i \Rbrack\right).
\ee
The space of $m$ dimensional integer rectifiable currents on $Z$ is denoted by
$\intrectcurr_m\left(Z\right)$.
\end{defn}

We note that in the previous case the mass measure of $T$ can be rewritten as
\be
||T|| = \sum_{i=1}^\infty ||\varphi_{i\#}\Lbrack \theta_i \Rbrack ||.
\ee

We define the boundary operator and the canonical set of a current. 

\begin{defn}[Ambrosio-Kirchheim]
An $m$ dimensional integral current is an integer rectifiable current,  $ T\in\intrectcurr_m(Z)$, such that $\partial T$
defined as
\begin{equation}
\partial T \left(f, \pi_1,..., \pi_{m-1}\right) = T \left(1, f, \pi_1,..., \pi_{m-1}\right)
\end{equation}
is a current. We denote by $\intcurr_m\left(Z\right)$ the space of $m$ dimensional integral currents on $Z$.
\end{defn}

\begin{defn}[Ambrosio-Kirchheim] \label{defn-set}
Let $T \in \curr_m(Z)$, the canonical set of $T$ is the set
\be \label{def-set-current}
\set\left(T\right)= \{p \in Z: \Theta_{*m}\left( \|T\|, p\right) >0\},
\ee
where $\Theta_{*m}\left( \|T\|, p\right)$ is the lower $m$ dimensional density of $\|T\|$ at $p \in Z$.
\be 
\Theta_{*m}\left( \|T\|, p\right)= \liminf_{r\to 0} \frac{\|T\|(B_r(p))}{\omega_m r^m}.
\ee
Here $\omega_m$ denotes the volume of the unit ball in $\R^m$.
\end{defn}

\begin{defn} [Sormani-Wenger]\label{defn-integral-current-space}
We say that $(X,d,T)$ is an $m$ dimensional integral current space if either $\left(X,d\right)$ is a metric space and $T \in \intcurr_m(\overline{X})$ with $\set\left(T\right)=X$ or $(X,d,T)=(\emptyset,0,0)$.

We say that $(\emptyset,0,0)$ is the zero integral current space and denote it by $\bf 0$. Moreover, we denote by $\mathcal{M}^m$  the space of $m$ dimensional integral current spaces and by $\mathcal{M}_0^m$ the space of $m$ dimensional integral current spaces whose canonical set is precompact. 
\end{defn}

Ambrosio-Kirchheim proved in \cite{AK} that if $T \in \intrectcurr_m\left(Z\right)$ then $\set(T)$ is a countably $\mathcal{H}^m$ rectifiable metric space.  Hence,  for any $m$ dimensional integral current space $(X,d,T)$,  $X$ is countably $\mathcal{H}^m$ rectifiable. Ambrosio-Kirchheim also characterized the mass measure.

\begin{defn}\label{def-weight}
Let $T \in \intrectcurr_m\left(Z\right)$ with parametrization $\left(\{\varphi_i\}, \theta_i\right)$. Then the function $\theta_T: Z \to \N \cup \{0\}$ given by
\be\label{weight}
\theta_T= \sum_{i=1}^\infty \theta_i\circ\varphi_i^{-1}\One_{\varphi_i\left(A_i\right)}
\ee
is called the weight of $T$.
\end{defn}

\begin{lem}[Ambrosio-Kirchheim]\label{lemma-weight}
Let $T \in \intrectcurr_m\left(Z\right)$. Then there is a function
\be \label{eqn-lem-weight-lambda}
\lambda:\set(T) \to [m^{-m/2}, 2^m/\omega_m]
\ee
such that
\be \label{eqn-lem-weight-new-key}
\Theta_{*m}(||T||,x)=\theta_T(x)\lambda(x) 
\ee
for $\mathcal{H}^m$ almost every $x\in \set (T)$ and
\be \label{eqn-lem-weight-2}
||T||=\theta_T \lambda \mathcal{H}^m \rstr \set(T),
\ee
where $\omega_m$ denotes the volume of an unitary ball in $\R^m$ and $\theta_T$ is the weight of $T$ as in Definition \ref{def-weight}. 
\end{lem}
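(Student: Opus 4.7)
The plan is to argue chart-by-chart using Kirchheim's theory of metric differentials. By the parametrization \eqref{param-representation}, which comes with pairwise disjoint images $\varphi_i(A_i)$ and additivity of mass, it suffices to prove the local statement: for a single bilipschitz chart $\varphi : A \subset \R^m \to Z$ with integer multiplicity $\theta : A \to \N$, the mass of $\varphi_{\#}\Lbrack\theta\Rbrack$ equals $(\theta\circ\varphi^{-1})\,\lambda\,\mathcal{H}^m \rstr \varphi(A)$ for a scalar function $\lambda$ taking values in $[m^{-m/2}, 2^m/\omega_m]$; the global formulas \eqref{eqn-lem-weight-2} and \eqref{eqn-lem-weight-new-key} then follow by summing and by Lebesgue differentiation.

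First I would invoke Kirchheim's theorem: at $\mathcal{L}^m$-a.e.\ $x \in A$ the map $\varphi$ admits a metric differential $\md_x\varphi$, a seminorm on $\R^m$ satisfying $d_Z(\varphi(x+v),\varphi(x)) = \md_x\varphi(v) + o(|v|)$. The associated metric area formula reads
\be
\mathcal{H}^m(\varphi(E)) = \int_E \mathbf{J}(\md_x\varphi) \, d\mathcal{L}^m(x)
\ee
for Borel $E \subset A$, where $\mathbf{J}(s) := \omega_m/\mathcal{H}^m(\{v \in \R^m : s(v) \leq 1\})$ is the Jacobian of the seminorm $s$. Next, unwinding the pushforward \eqref{def-push-forward} of the current $\Lbrack\theta\Rbrack$ in \eqref{def-current-from-function} and differentiating the tuples $(f,\pi)$ against $\md_x\varphi$, one obtains the parallel representation
\be
\|\varphi_{\#}\Lbrack\theta\Rbrack\|(B) = \int_{\varphi^{-1}(B)} \theta(x)\, \mathbf{I}(\md_x\varphi) \, d\mathcal{L}^m(x),
\ee
where $\mathbf{I}(s)$ is the second seminorm invariant arising as the supremum of $\det(L)$ over linear $L : (\R^m,s) \to \R^m$ that are $1$-Lipschitz. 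Combining the two identities via change of variables gives $\|\varphi_{\#}\Lbrack\theta\Rbrack\| = (\theta\circ\varphi^{-1})\,\lambda\,\mathcal{H}^m \rstr \varphi(A)$ with $\lambda(y) := \mathbf{I}(\md_x\varphi)/\mathbf{J}(\md_x\varphi)$ at $x = \varphi^{-1}(y)$, which sums over $i$ to yield \eqref{eqn-lem-weight-2}. The pointwise identity \eqref{eqn-lem-weight-new-key} then follows by standard measure differentiation of $\|T\|$ against $\omega_m r^m$, using that $\mathcal{H}^m$-almost every point of the countably $\mathcal{H}^m$-rectifiable set $\set(T)$ has unit $\mathcal{H}^m$-density.

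The main obstacle will be pinning down the sharp bounds $m^{-m/2} \leq \lambda \leq 2^m/\omega_m$, which is a purely convex-geometric question about the ratio $\mathbf{I}(s)/\mathbf{J}(s)$ for a seminorm $s$ on $\R^m$. Writing $K_s = \{v \in \R^m : s(v) \leq 1\}$, the upper bound follows from controlling the size of $K_s$ against the Euclidean unit ball via the $1$-Lipschitz supremum characterization of $\mathbf{I}(s)$, while the lower bound is obtained from John's ellipsoid theorem, which guarantees that the centrally symmetric convex body $K_s$ is $\sqrt{m}$-bilipschitz to a Euclidean ball and therefore pins the ratio $\mathcal{H}^m(K_s)/\mathbf{I}(s)^{-1}$ down by a factor of $m^{-m/2}$. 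Once these convex-geometric bounds are established, the remaining steps reduce to routine applications of the metric area formula and Radon measure differentiation.
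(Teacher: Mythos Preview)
The paper does not actually prove this lemma: it is stated with the attribution ``[Ambrosio--Kirchheim]'' and used as a black-box citation from \cite{AK}, with no argument supplied. So there is no ``paper's own proof'' to compare against.

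That said, your sketch is essentially the Ambrosio--Kirchheim argument itself (their Theorem~9.5 and the surrounding material on the area factor): reduce to a single bilipschitz chart, invoke Kirchheim's metric differential and the metric area formula, express the mass measure of $\varphi_{\#}\Lbrack\theta\Rbrack$ through a second seminorm invariant, and take the ratio to define $\lambda$. The convex-geometric bounds you outline are also the right ones---John's ellipsoid gives the $m^{-m/2}$ lower bound, and the $2^m/\omega_m$ upper bound follows from comparing the unit ball of a norm to the circumscribed cube. One point to tighten: your description of $\mathbf{I}(s)$ as a supremum of $\det(L)$ over $1$-Lipschitz linear maps is not quite the definition Ambrosio--Kirchheim use (they work with the dual characterization via the mass of the parallelepiped spanned by an orthonormal frame in the tangent norm), and you would need to verify that your formulation agrees with theirs before the ratio $\mathbf{I}/\mathbf{J}$ lands in the claimed interval. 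But the overall architecture is sound and matches the original source; it is simply not something this paper undertakes.
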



\section{Intrinsic Flat Convergence}\label{sec-IFconvergence}

In this section we give the definition of intrinsic flat distance between integral current spaces and state various intrinsic flat convergence theorems. 
Most of the results stated here can be found in \cite{CS11}.

Let  $S_i \in \intcurr_{m}(Z)$, $i=1,2$, then the flat distance between them 
is given by 
\begin{equation}\label{equation:def-abstract-flat-distance}
d_{F}\left(S_1,S_2\right)= \inf\{\mass\left(U\right)+\mass\left(V\right)\},
\end{equation}
where the infimum is taken over all
 integral currents
$U\in\intcurr_m\left(Z\right)$ and $V\in\intcurr_{m+1}\left(Z\right)$
such that
\begin{equation} \label{eqn-Federer-Flat-2}
S_1- S_2 =U+\bdry V.
\end{equation}

\begin{defn}[Sormani-Wenger]\label{defn-IFdistance}
Let $(X_i,d_i,T_i) \in \mathcal{M}^m$. Then,
\begin{equation}\label{eqn-local-defn}
d_{\Fm}\left((X_1,d_1,T_1),(X_2,d_2,T_2)\right)= \inf d_F^Z
\left(\varphi_{1\#} T_1, \varphi_{2\#} T_2 \right), 
\end{equation}
where the infimum is taken over all complete metric spaces,
$\left(Z,d\right)$, and all isometric embeddings
\begin{equation}
\varphi_i : \left(\bar{X_i}, d_i\right)\to \left(Z,d\right).
\end{equation}
Note that in this definition we also include the zero integral current space ${\bf{0}}$. Its current, $0$, isometrically embeds into any $Z$ as $\varphi_\#0=0 \in \intcurr_m\left(Z\right)$.
\end{defn}

If one of the integral currents spaces is the zero integral current space, say $(X_2,d_2,T_2)={\bf{0}}$, we can choose $V=0$  and $U=T_1$. Then  by definition
\be
d_{\Fm}\left((X_1,d_1,T_1),{\bf{0}})\right) \leq \mass\left(T_1\right).
\ee
This estimate will be used  in the proof of Theorem \ref{thm-mainthm}  to calculate the lower $n$ dimensional  density of $\|T\|$ at points in the Gromov-Hausdorff limit space. 

In Theorem 3.27 of \cite{CS11} is proven that $d_{\Fm}$ is a distance on the class of precompact integral currents spaces, $\mathcal{M}_0^m$.

\begin{thm}[Sormani-Wenger]\label{thm-IFinsideGH}
Let $(X_j, d_j, T_j)$ be a sequence of $m$ dimensional integral current spaces. If there exist $D$, $M$ and $N:(0,\infty) \to \N$  such that for all $j$
\be
\diam(X_j) \leq D, \,\,\, \mass(T_j) + \mass(\bdry T_j) \leq M
\ee
and, for all $\varepsilon$ there are $N(\varepsilon)$  $\varepsilon$-balls that cover $X_j$, then
\be
(X_{j_k}, d_{j_k}) \GHto (X,d)\,\,\,\text{and}\,\,
(X_{j_k}, d_{j_k}, T_{j_k}) \Fto (Y,d_Y,T),
\ee
where $\left(Y,d_Y,T\right)$ is an $m$ dimensional integral current space either equal to  ${\bf 0}$ or
$Y \subset X$.
\end{thm}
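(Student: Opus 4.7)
\emph{Common compact ambient space.} My plan is to combine Gromov's compactness theorem, Ambrosio--Kirchheim's compactness theorem for integral currents in a compact metric space, and a support argument that places the flat limit inside the Gromov--Hausdorff limit. The diameter bound and uniform covering function $N(\varepsilon)$ put $(X_j,d_j)$ in the hypothesis of Theorem \ref{thm-GromovCompactness}, so after passing to a subsequence $(X_{j_k},d_{j_k}) \GHto (X,d_X)$. Gromov's embedding theorem, recalled in Section \ref{sec-GHconvergence}, supplies a compact metric space $(Z,d_Z)$ and isometric embeddings $\varphi_{j_k}:\bar X_{j_k}\to Z$ and $\varphi_\infty:X\to Z$ with $d_H^Z(\varphi_{j_k}(\bar X_{j_k}),\varphi_\infty(X))\to 0$. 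I relabel this subsequence as $(X_j,d_j,T_j)$ and search for the intrinsic flat limit inside $Z$.

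\emph{Flat subsequential limit in $Z$.} Set $\tilde T_j := \varphi_{j\#}T_j \in \intcurr_m(Z)$. Since $\varphi_j$ is $1$-Lipschitz, pushforward does not increase mass, so $\mass(\tilde T_j)+\mass(\bdry\tilde T_j)\leq M$, and each $\spt(\tilde T_j)\subset\varphi_j(\bar X_j)$ lies in the compact set $Z$. Ambrosio--Kirchheim's compactness theorem for integral currents in a compact metric space then yields a further subsequence and a current $\tilde T \in \intcurr_m(Z)$ with $d_F^Z(\tilde T_j,\tilde T)\to 0$, where $d_F^Z$ is the Federer--Fleming flat norm distance computed in $Z$.

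\emph{Reading off the intrinsic flat limit.} If $\tilde T=0$, then Remark \ref{rmrk-IFdistanceF} gives $d_\Fm((X_j,d_j,T_j),{\bf 0})\leq d_F^Z(\tilde T_j,0)\to 0$ and the intrinsic flat limit is the zero integral current space. Otherwise set $Y:=\set(\tilde T)$ equipped with $d:=d_Z|_{Y\times Y}$ and let $T:=\tilde T$ regarded as an element of $\intcurr_m(\bar Y)$; by Definition \ref{defn-integral-current-space} the triple $(Y,d,T)$ is an $m$-dimensional integral current space, and Remark \ref{rmrk-IFdistanceF} upgrades the flat convergence in $Z$ to $(X_j,d_j,T_j)\Fto (Y,d,T)$.

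\emph{Support containment $Y\subset X$, and the main obstacle.} Fix $p\in Y=\set(\tilde T)$, so $\Theta_{*m}(\|\tilde T\|,p)>0$. If $p\notin\varphi_\infty(X)$, set $\delta := d_Z(p,\varphi_\infty(X))>0$; by Hausdorff convergence $B_{\delta/2}(p)\cap\varphi_j(\bar X_j)=\emptyset$ for $j$ large, whence $\|\tilde T_j\|(B_{\delta/2}(p))=0$ since $\spt(\tilde T_j)\subset\varphi_j(\bar X_j)$. Lower semicontinuity of mass on open sets under flat convergence, a standard consequence of the definition of the flat norm together with the uniform boundary mass bound, then forces $\|\tilde T\|(B_{\delta/2}(p))=0$, contradicting $p\in\set(\tilde T)$. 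Hence $Y\subset\varphi_\infty(X)$ and $\varphi_\infty$ identifies it with a subset of $X$. The principal obstacle is the black box invoked in the second step: Ambrosio--Kirchheim's compactness in a general compact metric space is a deep theorem, resting on weak-sequential compactness for normal currents with bounded mass together with the closure of integer rectifiability under flat limits; everything else is bookkeeping of pushforwards and Hausdorff limits.
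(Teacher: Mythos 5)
This theorem is not proved in the paper at all: it is imported as a black box from Sormani--Wenger \cite{CS11} (with the compactness input from Wenger \cite{We11}), so there is no internal proof to compare against. Your reconstruction follows the same route as the original argument in \cite{CS11} --- Gromov's embedding into a common compact $Z$, compactness for the pushed-forward integral currents $\varphi_{j\#}T_j$, and containment of $\set(\tilde T)$ in the Hausdorff limit via lower semicontinuity of mass on open sets --- and is essentially correct. The one imprecision is in your second step: Ambrosio--Kirchheim's compactness/closure theorem only yields \emph{weak} subsequential convergence to an integral current, and upgrading this to convergence in the flat norm $d_F^Z$ (which is what Remark \ref{rmrk-IFdistanceF} requires) is a separate nontrivial theorem of Wenger on the equivalence of weak and flat convergence for integral currents with uniformly bounded mass and boundary mass; you should cite that explicitly rather than fold it into the compactness black box.
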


\begin{rmrk}\label{rmrk-IFinsideGH}
When we apply Theorem \ref{thm-IFinsideGH} we generally consider that all the spaces are embedded in a complete metric space. This assumption comes from its proof. Sormani-Wenger applied Gromov's embedding theorem \cite{Gromov} to obtain 
a complete metric space $Z$ and isometric embeddings $\varphi_j : X_j \to Z$, 
$\varphi : X \to Z$  such that for a subsequence 
\be 
d^Z_H(\varphi_j (X_j), \varphi (X)) \to 0.
\ee
Then they show that a further subsequence satisfies
\be 
d^Z_F (\varphi_{j\#} T_j, \varphi_{\#} T) \to 0.
\ee
\end{rmrk}


Given $T \in \curr_m(Z)$
and a Borel set $A$, the restriction of $T$ to $A$ is a current  $T \rstr A \in  \curr_m(Z)$ given by 
\be
\left( T \rstr A \right) (f, \pi)=T( 1_A f , \pi),
\ee
where $1_A$ is the indicator function of $A$. See Definition 2.19 in \cite{CS11}. The mass of $T \rstr A$
is $||T||(A)$.


\begin{thm}[Sormani \cite{SorArzela} ]\label{thm-SorArzela}
Let $(X_j, d_j, T_j)$ be a sequence of  integral current spaces such that 
\be
(X_j, d_j, T_j) \Fto (X_\infty ,d_\infty,T_\infty ).
\ee
Then for almost  all $r>0$,
\be
(\bar B_{ r}(x_j) , d_j, T_j \rstr \bar B_{r}(x_j) )
\ee
are integral current spaces. 
\end{thm}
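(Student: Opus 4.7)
The plan is to lift the intrinsic flat convergence to a common ambient space and then apply Ambrosio--Kirchheim slicing with respect to the distance function to the limit point. By Remark \ref{rmrk-IFinsideGH} and Definition \ref{defn-Cauchy}, embed all the spaces isometrically into a complete metric space $Z$ via maps $\varphi_j, \varphi_\infty$ so that $\hat T_j := \varphi_{j\#}T_j$ converges to $\hat T_\infty := \varphi_{\infty\#}T_\infty$ in the flat norm on $Z$, and $\varphi_j(x_j) \to x_\infty$ for some $x_\infty \in Z$. Let $\rho(z) := d_Z(z, x_\infty)$, which is $1$-Lipschitz, and write $B_r := \{\rho < r\} \subset Z$.

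First I would verify that $M_j(r)$ is an integral current space for a.e. $r$. Ambrosio--Kirchheim's slicing theorem applied to each pair $(\hat T_j, \rho)$ and to $(\hat T_\infty, \rho)$ provides a set $E \subset (0, \infty)$ of full measure (the countable intersection of the individual good sets) such that for every $r \in E$ the restrictions $\hat T_j \rstr B_r$, $\hat T_\infty \rstr B_r$ and the slices $\langle \hat T_j, \rho, r\rangle$ are integral currents, satisfying
\[
\partial(\hat T_j \rstr B_r) = \langle \hat T_j, \rho, r\rangle + (\partial \hat T_j) \rstr B_r.
\]
Since $\varphi_j$ is distance preserving, $\varphi_{j\#}(T_j \rstr B_r(x_j))$ equals $\hat T_j$ restricted to $B_r(\varphi_j(x_j))$; and because $\varphi_j(x_j) \to x_\infty$, the symmetric difference $B_r(\varphi_j(x_j)) \triangle B_r$ has $\|\hat T_j\|$-mass tending to $0$ for every $r$ outside the at-most-countable set where $\|\hat T_\infty\|$ charges a sphere $\{\rho = r\}$. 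This identifies $M_j(r)$ with the restriction of $\hat T_j$ to $B_r$ up to a negligible correction.

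Second, to obtain the flat convergence of $M_j(r)$, fix for each $j$ a flat decomposition $\hat T_j - \hat T_\infty = U_j + \partial V_j$ with $\epsilon_j := \mass(U_j) + \mass(V_j) \to 0$, which exists by Remark \ref{rmrk-IFdistanceF}. Restricting to $B_r$ and applying the boundary-slicing identity $(\partial V_j) \rstr B_r = \partial(V_j \rstr B_r) - \langle V_j, \rho, r\rangle$ yields
\[
\hat T_j \rstr B_r - \hat T_\infty \rstr B_r = \bigl(U_j \rstr B_r - \langle V_j, \rho, r\rangle\bigr) + \partial(V_j \rstr B_r).
\]
The mass of the first summand is at most $\mass(U_j) + \mass(\langle V_j, \rho, r\rangle)$ and of the second at most $\mass(V_j) \le \epsilon_j$. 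The slicing inequality $\int_0^\infty \mass(\langle V_j, \rho, r\rangle) \, dr \le \mass(V_j) \le \epsilon_j$ implies that $r \mapsto \mass(\langle V_j, \rho, r\rangle)$ tends to $0$ in $L^1((0,\infty))$, hence pointwise a.e. along a subsequence. Combining with the vanishing of $\mass(U_j)$ and $\mass(V_j)$ gives $d_\Fm(M_j(r), M_\infty(r)) \to 0$ for a.e. $r$.

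The main obstacle will be bookkeeping the several "almost every $r$" exceptional sets on a common full-measure $r$-set: the slicing good sets for each of the countably many currents $\hat T_j$, $\hat T_\infty$, $V_j$; the countable collection of radii where the sphere $\{\rho = r\}$ carries mass or where the extrinsic ball $B_r$ disagrees nontrivially with the intrinsic ball $B_r(x_j)$; and the subsequential extraction needed to pass from $L^1$-convergence of the slice masses to pointwise convergence. Once these are aligned the slicing computation above delivers the conclusion.
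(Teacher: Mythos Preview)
The paper does not supply a proof of this result; it is quoted from Sormani's paper \cite{SorArzela} and used as a black box in the proof of Theorem~\ref{thm-mainthm}. So there is nothing in the present paper to compare your argument against. That said, your approach---push everything into a common $Z$, slice with the Lipschitz function $\rho=d_Z(\cdot,x_\infty)$, and restrict a flat decomposition $\hat T_j-\hat T_\infty=U_j+\partial V_j$ to the sublevel $\{\rho<r\}$---is exactly the standard route and is essentially how this kind of localization is proved in the Ambrosio--Kirchheim / Sormani--Wenger framework.

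Two points deserve more care than you give them. First, your claim that $\|\hat T_j\|\bigl(B_r(\varphi_j(x_j))\triangle B_r\bigr)\to 0$ is not justified: flat convergence does not control the mass measures $\|\hat T_j\|$ on thin annuli. The clean fix is to slice with $\rho_j=d_Z(\cdot,\varphi_j(x_j))$ instead of $\rho$; then $\varphi_{j\#}(T_j\rstr B_r(x_j))=\hat T_j\rstr\{\rho_j<r\}$ exactly, and the only annulus comparison left is for the \emph{fixed} measure $\|\hat T_\infty\|$, where your ``at most countably many bad radii'' argument is valid. Second, you correctly note that $\int_0^\infty \mass(\langle V_j,\rho,r\rangle)\,dr\le\epsilon_j\to 0$ only yields pointwise a.e.\ convergence of the slice masses along a subsequence, and you list this as ``bookkeeping''. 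It is slightly more than that: without an additional argument you obtain $M_{j_k}(r)\Fto M_\infty(r)$ for a.e.\ $r$ only along a subsequence, which is weaker than the statement as written. For the application in this paper that is harmless (one has already passed to a subsequence), but if you want the full sequence you should either argue via summability after a further thinning and then a subsubsequence diagonalization, or accept the subsequential version.
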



\begin{lem}[Sormani-Wenger \cite{CS11}]\label{lem-distFlatLip}
Let $(X_i,d_i)$ be complete metric spaces and $\varphi: X_1 \to X_2$ be a $\lambda >1$ bilipschitz map. If $T \in \intcurr_m(X_1)$, then for $M_1=(\set(T), d_1,T)$ and  $M_2=(\set(\varphi_\#T), d_2,\varphi_\# T)$ the following inequality holds
\be
d_{\Fm}\left(M_1,M_2\right) \leq k_{\lambda,m} \max\{\diam(\spt T), \diam(\varphi(\spt  T))\} \mass(T)
\ee
where $k_{\lambda,m}=\frac12(m+1)\lambda^{m-1}(\lambda-1)$.
\end{lem}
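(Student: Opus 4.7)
The plan is to apply Definition \ref{defn-IFdistance} by explicitly constructing a common metric space $Z$ containing isometric copies of $X_1, X_2$, together with an $(m+1)$-current $V$ and an $m$-current $U$ satisfying $\psi_{1\#} T - \psi_{2\#}\varphi_\# T = U + \partial V$, and then bounding $\mass(U) + \mass(V)$. The biLipschitz hypothesis is used both to embed $X_1$ and $X_2$ close to each other in $Z$ and to control the mass of the cylindrical filling between them.

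First I would form $Z = X_1 \sqcup X_2$ with a metric extending $d_1, d_2$ by
\[
d_Z(x, y) \;=\; h \;+\; \inf_{z \in \spt T}\bigl\{d_1(x, z) + d_2(\varphi(z), y)\bigr\}, \qquad x \in X_1, \; y \in X_2,
\]
for a height parameter $h>0$. The $\lambda$-biLipschitz bound yields $|d_1(z, z') - d_2(\varphi(z), \varphi(z'))| \leq (\lambda - 1) D$ with $D := \max\{\diam\spt T, \diam\varphi(\spt T)\}$, which is precisely the slack needed for the triangle inequality on $Z$ when one chooses $h = \frac{1}{2}(\lambda - 1) D$. The canonical inclusions $\psi_i : X_i \hookrightarrow Z$ are then isometric and satisfy $d_Z(\psi_1(z), \psi_2(\varphi(z))) \leq h$ for every $z \in \spt T$.

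Next I would build $V$ as a cylindrical filling. After passing if necessary to a geodesic completion $\bar Z$ of $Z$, choose measurably a unit-speed geodesic $\gamma_z : [0, 1] \to \bar Z$ from $\psi_1(z)$ to $\psi_2(\varphi(z))$ and define the homotopy $H(t, z) := \gamma_z(t)$. Its spatial Lipschitz constant is $\Lip(H(t, \cdot)) \leq (1-t) + t\lambda$, and its temporal speed is at most $h$. Let $S$ denote Ambrosio-Kirchheim's product of the unit-weight interval current on $[0,1]$ with $T$, and let $S'$ be the analogous product with $\partial T$ in place of $T$. Setting $V := H_\# S \in \intcurr_{m+1}(\bar Z)$, the product-current boundary formula gives
\[
\partial V \;=\; \psi_{2\#}\varphi_\# T \;-\; \psi_{1\#} T \;-\; H_\# S',
\]
so $U := -H_\# S'$ realizes the identity required by Definition \ref{defn-IFdistance}.

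Finally I would estimate the masses by the area formula, obtaining
\[
\mass(V) \;\leq\; h \int_0^1 \bigl((1-t) + t\lambda\bigr)^m \, dt \cdot \mass(T),
\]
and a parallel bound for $\mass(U)$ in dimension $m-1$ (which vanishes when $\partial T = 0$, as in our main application). Substituting $h = \frac{1}{2}(\lambda - 1) D$ and simplifying should yield a bound of the form $k_{\lambda, m} D\,\mass(T)$. The main obstacle is matching the precise constant $\frac{1}{2}(m+1)\lambda^{m-1}(\lambda - 1)$: the naive integral above equals $(\lambda^{m+1} - 1)/((m+1)(\lambda - 1))$, which is tight for $\lambda$ near $1$ but exceeds $(m+1)\lambda^{m-1}/2$ once $\lambda > 2m+1$. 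To reach the stated constant uniformly in $\lambda > 1$ one should argue chart-by-chart using the parametrization $T = \sum_i \varphi_{i\#}\mathbf{1}_{A_i}\theta_i$, extending each chart map $\varphi_i : A_i \subset \R^m \to \bar Z$ affinely in the $t$-direction and computing the Jacobian of the resulting $\tilde \varphi_i : [0, 1] \times A_i \to \bar Z$ directly; the factor $(m+1)$ emerges by symmetrizing over the $(m+1)$ coordinate directions in the determinant expansion, while the factor $\lambda^{m-1}$ comes from the sharp bound on the $(m-1)$-fold product of spatial Lipschitz constants orthogonal to a single distinguished direction.
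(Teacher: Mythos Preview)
The paper does not supply a proof of this lemma: it is stated as a result of Sormani--Wenger and cited from \cite{CS11}. There is therefore no proof in the present paper against which to compare your argument.

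Your outline does follow the route taken in \cite{CS11}: glue $X_1$ and $X_2$ across $\varphi$ with separation $h=\tfrac12(\lambda-1)D$, verify the triangle inequality using the bi-Lipschitz bound, and fill the gap with a cylinder over $T$. Two technical points deserve flagging. First, the homotopy $H(t,z)=\gamma_z(t)$ built from geodesics in a completion of $Z$ need not be Lipschitz in $z$: in a bare metric space geodesics do not vary Lipschitz-continuously with their endpoints, so the interpolated bound $\Lip H(t,\cdot)\le (1-t)+t\lambda$ is unjustified as written. Sormani--Wenger avoid this either by first embedding $Z$ isometrically into a Banach space (Kuratowski) where affine interpolation is available, or---equivalently---by the chart-by-chart pushforward you sketch in your final paragraph; that last suggestion is the right fix and should be promoted from afterthought to the actual construction. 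Second, as you yourself note, the term $U=-H_\# S'$ is controlled by $\mass(\partial T)$ rather than $\mass(T)$; the general statement in \cite{CS11} carries an additional $\mass(\partial T)$ contribution, which has been silently dropped in this paper because every application has $\partial T_j=0$.
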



\section{Proof of Theorem 0.1}\label{sec-proofMainT}

We describe now the proof of Theorem  \ref{thm-mainthm}.  First, from Sormani-Wenger's theorem, Theorem   \ref{thm-IFinsideGH}, we obtain a sequence that converges in Gromov-Hausdorff sense to $(X,d)$ and intrinsic flat sense to $(Y, d_Y, T)$.  If the intrinsic flat limit is non zero then $X$ has non zero $n$ Hausdorff measure by Ambrosio-Kirchheim's characterization of the mass measure,  Lemma \ref{lemma-weight}.  Then we prove that the set of regular points of $X$, $R(X)$, is contained in $Y$.  If $x$ is a regular point of $X$ we consider a sequence of points $x_j \in X_j \to x$. Applying Lemma \ref{lem-lipmaps} we obtain bilipchitz maps from $B_{r_0}(x_j)$ to $W_j \subset \R^n$ with uniform Lipschitz constants.   The existence of these maps provide an estimate on the intrinsic flat distance between the integral current spaces defined on $B_{r/L}(x_j)$ and $f_j(B_{r/L}(x_j))$ (cf. Sormani-Wenger's  lemma, Lemma \ref{lem-distFlatLip}).  From these estimates we obtain bounds of the form  $||T||(B_r(x)) \geq C(n,r)r^n$ such that $\lim_{r\to 0} C(n,r)> 0$.  This shows $x \in Y$ and thus  $R(X)$ is contained in $Y$.  To see that the singular points of $X$, $X \setminus R(X)$, are contained in  $Y$ we apply Otsu-Shioya's result about the Hausdorff dimension of the singular set of $X$, Ambrosio-Kirchheim's characterization of the mass measure and, Burago-Gromov-Perelman's Bishop and Bishop-Gromov Volume Comparison theorems for Alexandrov spaces \cite{BGP}.

We first prove the following lemmas. 

\begin{lem}\label{TLeH}
Let $(X,d)$ be a $n$ dimensional Alexandrov space equipped with an integral current structure $T$ with weight equal to one. Then for all $r > 0$
\be 
||T||(B_r(x))\le c(n) \mathcal H^n(B_r(x)).
\ee 
In particular, if $(X,d)\in\AlexnkD$ then
\be
  ||T||(X)\le C(n,\kappa,D)
\ee
 \end{lem}

\begin{proof}
By Lemma \ref{lemma-weight},
\be 
||T||(B_r(x))=\int_{B_r(x)}\theta_T(x)\lambda(x)d\mathcal H^n \leq 2^n/ \omega_n \mathcal H^n(B_r(x)),
\ee
where we used that $\theta_T=1$ and $\lambda \leq 2^n/ \omega_n$.

To finish the proof,  by the Bishop Volume Comparison theorem for Alexandrov spaces (Theorem 10.2 in \cite{BGP})  we know that
\be 
\mathcal H^n(B_r(x))\le\mathcal H^n(B_r(\mathbb M_\kappa^n)),
\ee
where $\mathbb M_\kappa^n$ denotes the $n$-dimensional complete simply connected space of constant sectional curvature $\kappa$ and  $B_r(\mathbb M_\kappa^n)$ a ball in $\mathbb M_\kappa^n$ with radius $r$.
\end{proof}

\begin{lem}\label{lem-standcurr}
Let $(X,d,T)$ be a $n$ dimensional integral current space  with weight equal to 1, $\theta_T=1$ and $f: X \to f(X) \subset \mathbb \R^n$ a bilipschitz map. 
Then $(f(X), d_{\mathbb R^n}, f_{\#}  T)$ has the standard current structure, 
\be 
 f_{\#}  T (h,\pi) =  \Lbrack 1_{f(X)}  \Rbrack \left(h, \pi\right) = \int_{f(X)} h \det\left(\nabla \pi_i\right) \, d\mathcal{L}^n.
\ee
In particular, $||  f_{\#} T || = \mathcal{L}^n \rstr f(X) $.
\end{lem}

\begin{proof}
Since $T \in \curr_n(\bar X)$ there is a parametrization  $\left(\{\varphi_i\}, \{\theta_i\}\right)$ as in Definition \ref{def-param-rep}, such that 
\be
T= \sum_{i=1}^\infty \varphi_{i\#} \Lbrack 1_{A_i} \Rbrack.
\ee
Let $(h,\pi) \in \mathcal D^n(f(X))$. By the previous equality, the definition of the pushforward of a current,  change of variables in $\mathbb R^n$,  and the fact that $f(\varphi_i(A_i))$ are mutually disjoint,
\begin{align}
f_\# T (h,\pi) = & \sum_{i=1}^\infty f_\#( \varphi_{i\#} \Lbrack 1_{A_i} \Rbrack ) (h,\pi)=  \sum_{i=1}^\infty  \int_{A_i \subset \R^n}  h \circ f \circ \varphi_i \det\left(\nabla (\pi_i \circ f \circ \varphi_i )\right) \, 
d\mathcal{L}^n \\
& = \sum_{i=1}^\infty   \int_{ f ( \varphi_i(A_i)) \subset \R^n} h \det\left(\nabla \pi_i \right) \, d\mathcal{L}^n =  \int_{ \union f ( \varphi_i(A_i)) \subset \R^n} h \det\left(\nabla \pi_i \right) \, d\mathcal{L}^n. 
\end{align}

Since $\mathcal H^n(X \setminus \cup \varphi_i(A_i)) =0$ and $f$ is Lipschitz, 
\be
\mathcal H^n(f(X) \setminus \cup   f( \varphi_i(A_i))) \leq \Lip^n(f)  \mathcal H^n(X \setminus \cup \varphi_i(A_i)) =0. 
\ee
 Hence, 
 \be
f_\# T (h,\pi) =  \int_{ f (X)} h \det\left(\nabla \pi_i \right) \, d\mathcal{L}^n.
\ee 
The claim about $||f_\#T ||$ follows from Example \ref{basic-current}.
\end{proof}

\begin{proof}[\bf Proof of Theorem \ref{thm-mainthm}]
By the compactness theorem for Alexandrov spaces, Theorem \ref{thm-alexComp},  there is an Alexandrov space $(X,d)$ with nonnegative curvature, $\diam(X) \leq D$ and Hausdorff dimension $\leq n$ and, a convergent subsequence:
\be
(X_{j_k}, d_{j_k}) \GHto (X,d).
\ee
This, together with the uniform upper bound of  $||T_j||(X_j)$  given by Lemma \ref{TLeH} allow us to apply Sormani-Wenger's theorem, Theorem \ref{thm-IFinsideGH}. Thus, possibly taking a further subsequence there is an intrinsic flat convergent subsequence:
\be
(X_{j_k}, d_{j_k}, T_{j_k}) \Fto (Y,d_Y,T),
\ee
where either $(Y,d_Y,T)$ is the zero integral current space or  $(Y,d_Y)$ can be viewed as a subspace of $(X, d)$. If $(Y,d_Y, T)$ is the zero integral current space then there is nothing else to prove.

From now on assume that $Y \subset X$. We have to show that $X=Y$.
Recall that the mass measure of $T$, $||T||$, is a finite Borel measure on $\bar Y$ and that by definition of $n$ dimensional integral current space, 
\be 
Y= \left\{ x \in \bar Y \,|\,  \liminf_{r\to 0} \frac{||T||(B_r(x))}{\omega_nr^n}>0  \right\}. 
\ee
To simplify notation we suppose that  $||T||$ is a measure on $X$, where $||T|| (A): = ||T|| (A \cap \bar Y)$ for any Borel set $A \subset X$, and that
\be
(X_j, d_j) \GHto (X,d) \,\,\text{and}\,\, (X_j, d_j, T_j) \Fto (Y,d_Y,T).
\ee

We will first prove that the set of regular points of $X$, $R(X)$, is contained in $Y$.  We claim that the Hausdorff dimension of $X$ is $n$.  Since $(Y, d_Y, T) \neq \bf 0$ is a $n$ dimensional integral current space, 
$\dim_H(Y)=n$.  Hence, $n \leq \dim_H(X)$ and by Burago-Gromov-Perelman's theorem, Theorem \ref{cor-collapsingSeq},  $\dim_H(X) \leq n$.  Thus, we have our claim.

Let  $x \in R(X)$. Then by Lemma \ref{lem-lipmaps}  there exist $x_j \in X_j$, $r_0 > 0$ and bilipschitz functions  $f_j:  B_{r_0}(x_j) \to W_j \subset \R^n$ with $\Lip(f_j),\Lip(f_j^{-1}) \leq L$.  By Sormani's Theorem \ref{thm-SorArzela}, we can assume that
\begin{equation} 
( \bar B_{r_0}(x_j) , d_j, T_j \rstr   \bar B_{ r_0}(x_j) )
\end{equation}
are $n$ dimensional integral currents spaces. Then
\begin{align}\label{eq-pushball}
( f_j(\bar B_{r_0}(x_j)) , d_{\R^n}, f_ {j \#} (T_j \rstr \bar B_{r_0}(x_j))
\end{align}
are $n$ dimensional integral currents spaces.

With no loss of generality, assume that 
$f_j(x_j)=0$ for all $j$. Let $r \in (0,r_0)$. Then recalling that $\Lip(f_j^{-1}) \leq L$ we get $f_j^{-1}(B_{r/L}(0)) \subset B_r(x_j)$. Consider the $n$ dimensional integral current spaces,  
\begin{align} 
M'_j(r)=( B_{r/L}(0) , d_{\R^n}, f_ {j \#} (T_j \rstr  f_j^{-1}(B_{r/L}(0)))).
\end{align}
By Lemma \ref{lem-standcurr},  the spaces given by  (\ref{eq-pushball}) have the standard current structure of $ f_j(\bar B_{r_0}(x_j))  \subset \mathbb R^n$. Hence, the spaces $M'_j(r)$ do not depend on $j$, 
\begin{equation}
M'(r)=M'_j(r)=( B_{r/L}(0) , d_{\R^n},  \Lbrack 1_{B_{r/L}(0)} \Rbrack ).
\end{equation}
Define
\begin{equation} 
M_j(r)=( f_j^{-1}(B_{r/L}(0)), d_j, T_j \rstr f_j^{-1}(B_{r/L}(0))).
\end{equation}

Applying Sormani-Wenger's lemma, Lemma \ref{lem-distFlatLip}, we get the following estimate of the distance between   $M_j(r)$ and $M'(r)$,
\begin{align}\label{eq-ifDist}
d_{\mathcal F} (M_j(r), M'(r) )  \leq  & k_{L,n} \max\{\diam(\spt T_j), \diam(f_j(\spt  T_j))\} \mass( \Lbrack 1_{B_{r/L}(0)} \Rbrack )  \\
\leq &   \frac12 (n+1)L^{n-1} (L-1) 2 r \,  \omega_n (r/L)^n\\
= & \omega_n (n+1)L (L-1) r \, r^n.
\end{align}

By  Theorem \ref{thm-fillvol}, 
\be 
\fillvol ( \bdry M'(r)) =  c (r/L)^n.
\ee

From the previous inequalities and Portegies-Sormani's Theorem \ref{thm-fillvolCont},
\begin{align}\label{eq-fillvol}
\fillvol(\bdry M_j(r)) &\geq    \fillvol  (\bdry M'(r)) - d_{\mathcal F} (M'(r),M_j(r)) \\
&\geq  c (r/L)^n - \omega_n (n+1)L (L-1)  r \, r^n\\
&= \left(  \frac{c}{L^n} - \omega_n (n+1)L (L-1) r    \right) r^n.
\end{align}
For $r$ sufficiently small $C(n,r)=\frac{c}{L^n} - \omega_n (n+1)L (L-1) r $
is positive. Moreover, $\lim_{r \to 0} C(n,r) >0$.

Now, $\mass(T_j \rstr f_j^{-1}(B_{r/L}(0))) \leq \mass(T_j)$ and
$\mass( \bdry (T_j \rstr f _j^{-1}(B_{r/L}(0))))\leq \Lip^{n-1}(f^{-1}_j) \mass(\bdry  \Lbrack 1_{B_{r/L}(0)} \Rbrack )$. 
By Sormani-Wenger's theorem, Theorem \ref{thm-IFinsideGH}, there is a n dimensional integral current space $(B, d, T \rstr B)$ and a subsequence, denoted in the same way, such that 
\begin{equation}\label{eq-IFlimMj}
d_{\mathcal F} \left( M_j(r), (B, d, T \rstr B)\right) \to 0, 
\end{equation}
where either $(B, d, T \rstr B)= \bf 0$ or $B \subset B_r(x)$.  

By the continuity of the filling volume with respect to intrinsic flat distance, Theorem \ref{thm-fillvolCont},  Theorem \ref{thm-fillvol} and  (\ref{eq-ifDist}),
\begin{align}
\fillvol (\bdry  \left((B, d, T \rstr B) \right) = & \lim _{j \to \infty}  \fillvol  \left( \bdry M_j(r)\right) \\
\leq  &  \fillvol  (\bdry M'(r)) + d_{\mathcal F} (M'(r),M_j(r)) <   \infty.  
\end{align}
Thus  $(B, d, T \rstr B) \neq \bf 0$ and  $B \subset B_r(x)$. Once again, by the continuity of the filling volume with respect to intrinsic flat distance and  (\ref{eq-fillvol}),
\begin{align}
||T||(B_r(x)) & \geq  ||T||(B) \geq \fillvol (\bdry  \left((B, d, T \rstr B) \right) \\
& = \lim _{j \to \infty}  \fillvol  \left( \bdry M_j(r)\right) >  C(n, r)r^n.
\end{align}
Thus, 
\be\label{eq-lowerDen}
\liminf_{r\to 0} \frac{||T||(B_r(x))}{\omega_n r^n}> 0.
\ee
This proves that $x \in Y$. Hence, $R(X) \subset Y$.

To complete the proof, by Ambrosio-Kirchheim's characterization of the mass measure, Lemma \ref{lemma-weight} we know that $||T||$ can be written as $||T||=\theta_T \lambda \mathcal{H}^n \rstr Y$ where 
$\theta_T: \bar{Y} \to \N \union \{0\}$ is an integrable function such that
$\theta_T > 0$ in $Y$ and  $\lambda: Y \to \R$ is a nonnegative integrable function bounded below by  $\lambda \geq n^{-n/2}$. By Otsu-Shioya's Theorem \ref{thm-dimSingSet} we know that  $\mathcal H^n (X \setminus R(X))=0$.
It follows that $\mathcal H^n (Y \setminus R(X))=0$. Thus, for $x \in X$
\begin{align}
||T||(B_r(x)) &=  \int_{B_r(x)}\theta_T(y)\lambda(y)d\mathcal H^n\rstr Y \\
&=  \int_{B_r(x) \cap R(X)} \theta_T(y)\lambda(y)d \mathcal H^n\rstr Y \\
&\geq n^{-n/2} \mathcal{H}^n(B_r(x)) \\
& \geq  n^{-n/2} C(\kappa,n, D) r^n   
\end{align}
where $C(\kappa,n, D)=\mathcal{H}^n(X)/D^n$  when $\kappa \geq 0$ by Burago-Gromov-Perelman's Bishop-Gromov volume comparison for Alexandrov spaces, Theorem \ref{thm-BishopGromov}. Otherwise,  $C(\kappa,n, D)=\omega_n$ by Bishop volume comparison. It follows that $\liminf_{r\to 0} ||T||(B_r(x))/r^n>0$. This shows that $X \subset Y$.
\end{proof}


\appendix

\section{Filling Volume of  Spheres in Euclidean Space\\*[2ex] }\label{app}

Since the mass measure of an integral current space is lower semicontinuous with respect to intrinsic flat distance \cite{CS11},  here we study the notion of filling volume of a current. See \cite{CS10}.  In Theorem \ref{thm-fillvol} we see that the filling volume of a $n$ dimensional sphere of radius $r$ in Euclidean space rescales as $r^n$ times the filling volume of the $n$ dimensional sphere in Euclidean space of radius $1$.

\begin{defn}(c.f. \cite{PorSor})\label{defn-fillvol} 
Given an $n$ integral current space $M=(X,d,T)$, $n \geq 1$, define  the filling volume of $\bdry M= (\set(\bdry T),d,\bdry T)\neq \bf{0}$
by 
\be\label{eq-fillvol}
\fillvol(\bdry M)= \inf \{ \mass(N) \, |\, N=(Y,d_Y,U) \in \mathcal{M}^n, \,\, \bdry M= \bdry N\},
\ee
where $\bdry M= \bdry N$ means that there is an orientation preserving isometry between  $\bdry M$ and $\bdry N$. That is, 
there exists an isometry $\varphi: \set(\bdry T) \to  \set (\bdry U)$ such that $\varphi_\sharp \bdry T = \bdry U$.  
For $\bdry M= \bf{0}$ we set $\fillvol(\bdry M)=\infty.$
\end{defn}

\begin{rmrk}\label{rmrk-fillvolReal}
The infimum in (\ref{defn-fillvol}) is achieved when the space is precompact, see Remark 2.47 in Portegies-Sormani \cite{PorSor}. 
Moreover, it equals a positive number. 
\end{rmrk}

The notion of filling volume given in Definition \ref{eq-fillvol} is not exactly the same notion introduced by Gromov \cite{Gromov-filling}, however many similar properties hold. Gromov's  filling volume is defined using chains rather than integral current spaces and the notion of volume used by Gromov is not the same as Ambrosio-Kirchheim's mass.   

From the definition of filling volume and mass it follows that for  $M=(X,d,T) \in  \mathcal{M}^n$,
\be 
||T||(X) = \mass(T) \geq \fillvol(\bdry M). 
\ee

The continuity of the filling volume with respect to the intrinsic flat convergence follows from the following theorem. This fact was first observed by Sormani-Wenger \cite{CS10} building upon work by Wenger on flat convergence of integral currents in metric spaces \cite{Wenger-flat}.  

\begin{thm}[cf. Portegies-Sormani \cite{PorSor}]\label{thm-fillvolCont}
For any pair of integral current spaces, $M_i=(X_i,d_i,T_i)$, 
\be
\fillvol(\bdry M_1) \leq \fillvol(\bdry M_2) + d_{\mathcal F}(M_1,M_2).
\ee
\end{thm}

\begin{thm}\label{thm-fillvol}
Let $r>0$ and consider the $n$ dimensional integral current space  
\be M'(r)=( B_{r}(0) \subset \mathbb R^n, d_{\R^n},  \Lbrack 1_{B_{r}(0)} \Rbrack ).
\ee
Then, 
\be 
\fillvol (\bdry M'(r))  =  \fillvol (\bdry M'(1))   r^n.
\ee
\end{thm}

%
%
%
%

\begin{proof}
Let $s> 0$. By Remark \ref{rmrk-fillvolReal} we know that there is a $n$ integral current space  $N= (Z, d, U)$ where $\fillvol(\bdry M'(s))$ is realized. 
That is, there exists an orientation preserving isometry 
\be 
\varphi: \set (\bdry   \Lbrack 1_{B_{s}(0)} \Rbrack )  \to \set (\bdry U)
\ee
such that 
\be 
\varphi_\#  \bdry  \Lbrack 1_{B_{s}(0)} \Rbrack=  \bdry U
\ee
and 
 \be 
\fillvol (\bdry M'(s)) = \mass(U).
\ee

Note that for any $t>0$,  $\set (\bdry  \Lbrack 1_{B_{t}(0)} \Rbrack)= \bdry B_t(0)$.  Define $R: \bdry B_{r}(0) \to \bdry B_{s}(0)$ by $R(x)= \tfrac{s}{r}x$ and  let $\iota: (Z,d) \to (Z,\tfrac{r}{s}d)$ denote the identity map.  We claim that 
\be 
\tilde N=(Z, \tfrac{r}{s}d,  \iota_{\# }  U)
\ee
is a $n$ integral current space and that 
\be
\tilde \varphi= \iota \circ \varphi \circ R: \bdry B_r(0) \to  \iota (\set (\bdry U))
\ee
is an orientation preserving isometry. 

The pushforward of a current is a current and, from the definition of the mass measure it follows that 
\be 
\| \iota_\#   U \| \leq \Lip(\iota)^n \iota_\# \| U\|.
\ee
 Since $\iota$ is invertible and $\Lip(\iota^{-1})= \Lip^{-1}(\iota)$, we get 
 \be\label{eq-massiotaU}
 \| \iota_\#   U \| = \Lip(\iota)^n \iota_\# \| U\|. 
 \ee

This shows that 
\be
\set (\iota_\#   U)= \iota (\set(U)).
\ee
The pushforward and the boundary operator commute, hence $\bdry ( \iota_\#   U)= \iota_\#  \bdry U$ is a current.  Thus, $\tilde N $ is a $n$ integral current space.  With the same reasoning as before we see that 
\be 
\| \iota_\# \bdry  U \| = \Lip(\iota)^{n-1} \iota_\# \| \bdry U\|
\ee
and 
\be
\set (\iota_\#   \bdry U)= \iota(\set( \bdry U)).
\ee
Hence, $\set ( \bdry  \iota_\#  ( U)= \iota(\set( \bdry U))$.

By definition of pushforward, the definition of $R$ and change of variables in $\mathbb R^n$,
\be 
R_\#  \bdry  \Lbrack 1_{B_{r}(0)} \Rbrack =  \bdry  \Lbrack 1_{B_{s}(0)} \Rbrack.
\ee
Recalling that $N= (Z, d, U)$ realizes $\fillvol(\bdry M'(s))$, 
\be
 \tilde \varphi_\#  \bdry   \Lbrack 1_{B_{r}(0)} \Rbrack =  \iota_\#  ( \bdry U ) = \bdry ( \iota_\#   U ). 
\ee
This proves that $\tilde \varphi: \bdry M'(r) \to \bdry \tilde N$ is an orientation preserving isometry. 

Then by (\ref{eq-massiotaU}), given that $\Lip(\iota)=\tfrac{r}{s}$ and noting that $\iota$ equals the identity function,
 \be
 \fillvol (\bdry M'(r)) \leq  \mass(  \iota_\#   U) =( \tfrac {r}{s} )^n \mass(U).
\ee
Applying this inequality twice, we conclude that 
 \be 
 \fillvol (\bdry M'(r)) = ( \tfrac {r}{s} )^n  \fillvol (\bdry M'(s)).
\ee
Hence, 
\be 
 \fillvol (\bdry M'(r)) = r^n  \fillvol (\bdry M'(1))  .
\ee
\end{proof}

%

\vskip 5mm

\bibliographystyle{amsalpha}

\begin{thebibliography}{A}



\bibitem {AlexanderBishop} Stephanie Alexander and Richard Bishop,
  \textit{FK convex functions on metric spaces},
  Manuscripta Math, 110
  \textbf{} (2003) no. 1, 115-133.



\bibitem {AKP} Stephanie Alexander, Vitali Kapovitch and Anton Petrunin,
  \textit{Alexandrov geometry},
  a draft avaliable at {\it www.math.psu.edu/petrunin}.


\bibitem {AK} Luigi  Ambrosio and Bernd Kirchheim,
  \textit{Currents in metric spaces},
  Acta Mathematica, 185
  \textbf{} (2000) no. 1.


\bibitem {BBI} Dmitri Burago, Yuri Burago and  Sergei Ivanov,
  \textit{A Course in Metric Geometry},
  Graduate studies in mathematics, AMS, 33, (2001).



\bibitem {BGP} Yuri Burago, Mikhael Gromov and Grigori Perel'man,
  \textit{A.D. Alexandov spaces with curvature bounded below},
  Uspekhi Mat. Nauk, 47:2, (1992), 3--51;
  translation in Russian Math. Surveys,
  47:2, (1992), 1--58.


\bibitem{FF} Herbert Federer and Wendell H. Fleming,
\textit{Normal and integral currents}, 
Ann. of Math. (2), Vol 72 (1960), 458--520.






\bibitem{Gro81}  Mikhael Gromov,
\textit{Structures m\'etriques pour les vari\'et\'es riemanniennes},
Textes Math\'ematiques, Vol. 1,
Edited by J. Lafontaine and P. Pansu,
CEDIC, Paris, 1981. 



\bibitem {Gromov} Mikhael Gromov,
  \textit{Groups of Polynomial growth and expanding maps},
  Institute Hautes Etudes Sci. Publ. Math., 53, (1981), 53--73.
  
  
\bibitem{Gromov-filling} Mikhael Gromov,
\textit{Filling {R}iemannian manifolds},
J. Differential Geom.,  Vol 18, Issue 1, (1983), 1--147.
  
  \bibitem {Gro13} Mikhael Gromov,
  \textit{Dirac and Plateau Billiards in Domains with Corners},
  Central European Journal of Mathematics, Vol 12, Issue 8, (2014), 1109--1156.


\bibitem {Gro14} Mikhael Gromov,
\textit{Plateau Stein Manifolds},
Central European Journal of Mathematics, Vol 12, Issue 7, (2014), 923--951.


\bibitem{Ho} Shouhei Honda,
\textit{Ricci curvature and Orientability},
Calc. Var. 56: 174, (2017).



 \bibitem{Ja} Maree Jaramillo, Raquel Perales, Priyanka Rajan, Catherine Searle and Anna Siffert,
\textit{Alexandrov Spaces with Integral Current Structure},
arXiv:1703.08195, math.DG. To appear in 


 
\bibitem {LakzianDiam} Sajjad Lakzian,
  \textit{On diameter controls and smooth convergence away from singularities},
  DGA, Vol 47
  \textbf{} (2016) 99--129.


\bibitem {LR10} Nan Li and Xiaochun Rong,
{\it Relatively maximum volume rigidity in Alexandrov geometry}, Pacific J. of Mathematics, 259 no. 2, (2012) 387--420.

\bibitem{MP} Rostislav Matveev and Jacobus Portegies
 {\em Intrinsic flat and Gromov-Hausdorff convergence of manifolds with Ricci curvature bounded below}, 
 Journal of Geometric Analysis, Vol 27, 3, (2017) 1855-1873.
 

\bibitem{M2}  Mitsuishi, A. {\em Orientability and fundamental classes of Alexandrov spaces with applications},  arXiv:1610.08024, (2016), math.MG.


 \bibitem {MunnInt} Michael Munn,
  \textit{Intrinsic flat convergence with bounded Ricci curvature},
  arXiv: 1405.3313v2
  \textbf{} (2015) math.MG.
  
\bibitem {NuZiPe} Jesus Nunez-Zimbron and Raquel Perales,
\textit{A generalized tetrahedral property for spaces with conical singularities},
  arXiv: 1709.05877
   \textbf{} (2017) math.DG.


\bibitem {OS} Yukio Otsu and Takashi Shioya,
  \textit{The riemannian structure
of Alexandrov spaces},
  J. Differential Geometry, 39, (1994), 629--658.

\bibitem{Perales-1} Raquel Perales,
\textit{Volumes and Limits of Manifolds with Ricci Curvature and
Mean Curvature Bounds}, 
Diff. Geometry and its Applications,  Vol 48, (2016) 23--37.


\bibitem{Perales-2} Raquel Perales,
\textit{Convergence of Manifolds and Metric Spaces with Boundary}, 
arXiv:1505.01792, (2015) math.MG.


\bibitem {PortegiesSemi} Jacobus W. Portegies,
\textit{Semicontinuity of eigenvalues under intrinsic flat convergence},
 Calc. of Var. and PDE, Vol 54 (2), (2015) 1725--1766.
  
  
\bibitem{PorSor} Jacobus Portegies and Christina Sormani, 
\textit{Properties of the Intrinsic Flat Distance},
Algebra i Analiz Issue 3, Volume 29 (2017),  70--143.
  

\bibitem {SorArzela} Christina Sormani,
\textit{Intrinsic Flat Arzela-Ascoli theorems},
Communications in Analysis and Geometry Vol. 27, No 1, 2019.


\bibitem {CS10} Christina Sormani and Stefan Wenger,
  \textit{Weak convergence of currents and cancellation},
  Calc. Var. Partial Differential Equations, 38 No. 1-2, (2010) 183-206.


\bibitem {CS11} Christina Sormani and Stefan Wenger,
  \textit{The intrinsic flat distance between Riemannian manifolds and other integral current spaces},
  J. Differential Geom., 87  no. 1, (2011) 117-199.



\bibitem {Wenger-flat} Stefan Wenger,
\textit{Flat convergence for integral currents in metric spaces},
Calc. Var. Partial Differential Equations, Vol 28, No 2, (2007), 139--160.


\bibitem {We11} Stefan Wenger,
  \textit{Compactness for manifolds and integral currents with bounded diameter and volume},
  Calc. Var. Partial Differential Equations, 40 Issue 3-4,  (2011) 423--448.
  
  
\bibitem{Whit} Hassler Whitney,
\textit{Geometric integration theory},
Princeton University Press, 1957. 



\end{thebibliography}


\end{document}